\newtheorem{lem}{Lemma}[section]
\newtheorem{thm}{Theorem}[section]
\newtheorem{prop}{Proposition}[section]
\newtheorem{cor}{Corollary}[section]
\newtheorem{obs}{Remark}[section]
\newtheorem{defin}{Definition}[section]
\numberwithin{equation}{section}
\def\k0{\kappa_0}
\def\lgl{\langle}
\def\rgl{\rangle}
\def\bfu{{\bf{u}}}
\def\bfx{{\bf{x}}}
\def\bfy{{\bf{y}}}
\def\bfo{{\bf{0}}}
\def\bfv{{\bf{v}}}
\def\mR{{\mathbb{R}^2}}
\def\bpsi{{\bar{\psi}}}
\def\bphi{{\bar{\phi}}}
\def\be{{\bar{e}}}
\def\bE{{\bar{E}}}
\def\bPhi{{\bar{\Phi}}}
\def\tpsi{{\tilde{\psi}}}
\def\tphi{{\tilde{\phi}}}
\def\te{{\tilde{e}}}
\def\tE{{\tilde{E}}}
\def\tPhi{{\tilde{\Phi}}}
\def\ttau{{\tilde{\tau}}}
\def\ubx{{\bar{\bf y}}}
\def\lbx{{\underline{\bf y}}}
\begin{document}
\title[2D turbulence in NSE]
{2D turbulence in physical scales of the Navier-Stokes equations }
\author{R. Dascaliuc}
\address{Department of Mathematics\\
University of Virginia\\ Charlottesville, VA 22904}
\author{Z. Gruji\'c}
\address{Department of Mathematics\\
University of Virginia\\ Charlottesville, VA 22904}
\date{\today}
\begin{abstract}
Local analysis of the two dimensional
Navier-Stokes equations is used to obtain estimates on the energy and enstrophy fluxes involving
Taylor and Kraichnan length
scales and the size of the domain. In the framework of zero driving force and non-increasing global energy,
these bounds produce sufficient conditions for existence of
the direct enstrophy and inverse energy cascades.
Several manifestations of locality of the fluxes under these
conditions are obtained. All the scales involved are {\em actual
physical scales} in $\mathbb{R}^2$ and no homogeneity
assumptions are made.
\end{abstract}
\maketitle

\section{introduction}

Following the groundbreaking ideas of Kolmogorov \cite{Kol1,Kol2,Kol3},
Batchelor, Kraichnan and Leith \cite{Bat82,Bat88,Kra67,Kra71,Lei68} established the foundations of empirical
theory of 2D turbulence (BKL theory). One of the main features of the BKL theory
 is the existence of {\em enstrophy cascade} over a
wide range of length scales, called the {\em inertial range}, where the
dissipation effects are dominated by the transport of enstrophy from
higher to lower scales. In contrast to the 3D turbulence, the energy
in 2D case is cascading toward the {\em larger scales}, a phenomenon
referred to as the {\em inverse energy cascade}. Direct enstrophy
and inverse energy cascades have been observed in physical
experiments (albeit certain difficulties exist in generating a
purely 2D turbulent flow), but theoretical justification of these
phenomena using equations of fluid motion, and in particular, the
Navier-Stokes equations (NSE), remains far from being settled.
Technical complexity of the NSE makes it difficult to establish the
conditions under which such cascades can occur. In the 2D case, the
NSE possess a number of useful regularity properties (unlike the 3D
case for which the global regularity is an open problem). However,
the dynamical complexity of the NSE makes a detailed study of their
long time behavior a difficult enterprise. Under certain conditions,
existence of the global attractors of high fractal and Hausdorff
dimensions has been established for the 2D NSE; moreover, it is
believed that these attractors become chaotic (although the proof is
elusive). For an overview of various mathematical models of
turbulence and the theory of the NSE, see, e.g.,
\cite{FMRTbook,Fbook,ES} and \cite{L-R,CFbook,Tbook1}, respectively.

Most rigorous studies of 2D NSE turbulence have been made in Fourier settings. In particular, in \cite{FJMR} the framework
of space-periodic solutions and infinite-time averages
was used to study main aspects of the BKL theory, including
establishing a sufficient condition for the enstrophy cascade.
This condition, involving Kraichnan length scale, is akin to our condition
(\ref{scales_con_fin}) obtained in section 4. In contrast to \cite{FJMR},
our goal was to work in {\em physical space} and with finite-time averages,
dealing with actual length scales in $\mR$ rather than the Fourier wave numbers.

In this paper we extend to the 2D case the ideas introduced in
\cite{DG1} to establish the existence of the energy cascade and
space locality of the flux for the 3D NSE. There, one of the
difficulties was the possible lack of regularity, which led us to
using the framework of suitable weak solutions (\cite{S,CKN}). In
2D, the difficulties lie in the need to work with higher-order
derivatives in the case of the enstrophy cascade, as well as in
dealing with a rather complex phenomenology of the 2D turbulence.

Despite these differences, the basic setting for studying energy and
enstrophy transfer in physical scales remains the same in both 3D and 2D case.
We utilize the refined cut-off functions to localize the relevant physical quantities in physical space
and then employ ensemble averages satisfying certain optimality conditions together with
{\em dynamics} of NSE to link local quantities to global ones (see \cite{DG1} for a detailed
discussion of our physical scales framework).

We restrict our study to a bounded region, a ball, in $\mR$, and consider the case
of short-time or {\em decaying turbulence} by setting the driving force to zero.
Thus, in contrast to infinite-time averages used in \cite{FJMR}, we use averages over
finite times. The time intervals considered here depend on the size
of the domain as well as the viscosity (see (\ref{T_con})). The
spatial ensemble average is taken by considering optimal  coverings of the
spatial domain with balls at various scales. Also, to exclude the situations of
the uniform growth of kinetic energy without any movement between the scales we
restrict our study to {\em physical situations} where the kinetic energy on the (global) spatial domain $\Omega$
is non-increasing, e.g., a bounded domain with no-slip boundary conditions,
or the whole space with either decay at infinity or periodic boundary conditions.

The paper is structured as follows. In section 2 we provide a brief
overview of the 2D NSE theory, noting the
relevant existence and regularity results. We also point out important differences
between 2D and 3D NSE, and how these difficulties are reflected in the differences between
2D and 3D turbulence.

Section 3 introduces the physical quantities of energy, enstrophy, and palinstrophy,
as well as energy and enstrophy fluxes adopted to our particular settings. We also define
the ensemble averages to be used throughout the paper.

The main result of section \ref{balls} is a surprisingly simple
sufficient condition for the enstrophy cascade (\ref{scales_con_fin}),
according to which the averaged enstrophy flux toward the lower scales
is nearly constant over a range of scales. This condition, involving
the Kraichnan scale and the size of the domain, is reminiscent of the
Poincar\'e inequality on a domain of the corresponding size (see
Remark \ref{Rem_4.2}). Moreover, the condition in hand would be easy
to check in physical experiments as the averages involved are very
straightforward.

Section 5 commences a study of inverse energy cascade in physical space.
The existence of such cascades in the 2D NSE solutions remains an open question.
 Several partial results exist; in particular, in the space-periodic setting the energy flux is oriented
 towards lower (Fourier) scales in the region below the scales of the body force
 (\cite{FJMR}), but existence of the cascade could not be established. In contrast,  \cite{BJF} provides a
 condition for the inverse energy cascades inside spectral gaps of the body force.
We prove that if the global Taylor scale is dominated by the linear size of the domain,
then the averaged energy flux is constant over a range of large scales and is
oriented outwards (see Theorem \ref{back_casc_thm}).

The second part of the paper concerns {\em locality} of the energy
and enstrophy fluxes. Similarly to the 3D turbulence (\cite{O}),
it is believed  that the energy and enstrophy fluxes inside the
inertial ranges of the 2D turbulent flows depends strongly on the flow in
nearby scales, the dependence on lower and much higher scales
being weak. The theoretical proof of this conjecture remained
elusive. The first quantitative results on fluxes were obtained by
early 70's (see \cite{Kr}). Much later, the authors in \cite{LF}
used the NSE in the Fourier setting to explore locality of scale
interactions for statistical averages, while the investigation in
\cite{E} revealed the locality of filtered energy flux under the assumption that the solutions to the
vanishing viscosity Euler's equations saturate a defining inequality of a suitable
Besov space (a weak scaling assumption). A more recent work
\cite{CCFS} provided a proof of the quasi-locality of the energy flux in
the Littelwood-Paley setting.

In section 6 we obtain several manifestations of the locality of both energy and enstrophy fluxes in the
physical space throughout the inertial ranges. In particular, considering dyadic
shells at the scales $2^k R$ ($k$ an integer) in the physical space, we show that both
ultraviolet and infrared locality propagate \emph{exponentially} in
the shell number $k$.

To the best of our knowledge, the condition (\ref{scales_con_fin})
is presently the only condition (in any solution setting) implying
both the existence of the inertial range and the locality of the
enstrophy flux.  The same is true for the relation (\ref{back_ene_casc_con})
which implies both inverse energy cascade and energy flux locality in
the physical scales of the 2D NSE. Finally, we point out that our approach
is valid for a variety of boundary conditions
(in particular, the no-slip, periodic, or the whole space with decay at infinity);
moreover, it does not involve any additional homogeneity assumptions
on the solutions to the NSE.


\section{preliminaries}
We consider two dimensional incompressible Navier-Stokes equations (NSE)
\begin{equation}\label{inc-nse}
\begin{aligned}
\frac{\partial}{\partial t}\bfu(t,\bfx)-\nu\Delta \bfu(t,\bfx)
+(\bfu(t,\bfx)\cdot\nabla)\bfu(t,\bfx)+\nabla p(t,\bfx)&=0\\
\nabla\cdot\bfu(t,\bfx)&=0\;,
\end{aligned}
\end{equation}
where the space variable $\bfx=(x_1,x_2)$ is in $\mathbb{R}^2$ and the time
variable $t$ is in $(0, \infty)$. The vector-valued function $\bfu=(u_1,u_2)$
and the scalar-valued function $p$ represent the fluid velocity and
the pressure, respectively, while the constant $\nu$ is the
viscosity of the fluid.

Under appropriate boundary conditions this system admits a unique solution (see \cite{Tbook1}, \cite{CFbook}),
which is analytic in both space and time. For convenience, we generally assume no-slip boundary conditions on a bounded domain
\begin{equation}\label{BC}
\left.\bfu\right|_{\partial\Omega}=0, \qquad \Omega\ \ \mbox{bounded in}\ \mR
\end{equation}
(although the results hold for the other physical boundary conditions which imply smoothness and non-increasing global energy $\int_{\Omega}|\bfu|^2$).

 Thus, if
$\phi\in\mathcal{D}((0,\infty)\times\Omega)$, $\phi\ge0$, where
 $\Omega$ be an open connected set in $\mR$, then multiplying NSE by $\phi\bfu$ and
 integrating by parts we obtain the local energy equation
\begin{equation}\label{loc_ene_ineq}
2\nu\iint|\nabla\otimes\bfu|^2\phi\,d\bfx\,dt =
\iint|\bfu|^2(\partial_t\phi+\nu\Delta\phi)\,d\bfx\,dt
+\iint(|\bfu|^2+2p)\bfu\cdot\nabla\phi\,d\bfx\,dt\;
\end{equation}
\noindent where $\mathcal{D}((0,\infty)\times\Omega)$ denotes the
space of infinitely differentiable functions with compact support in
$(0, \infty) \times \Omega$.

We also consider the vorticity form of the 2D NSE by taking the curl
of (\ref{inc-nse}) viewed as a 3D equation with the third component
zero,
\begin{equation}\label{vorticity_eq}
\frac{\partial}{\partial t}\omega -\nu\Delta\omega + (\bfu\cdot\nabla){\omega}=0,
\end{equation}
where $\omega=\nabla\times\bfu$ (with the convention $\bfu=(u_1,u_2,0)$ and $\omega=(0,0,\omega)$).

Note that for the full 3D NSE (\ref{vorticity_eq}) would contain the vortex-stretching term $(\omega\cdot\nabla)\bfu$.

Multiplying (\ref{vorticity_eq}) with $\phi\,\omega$ yields the local enstrophy equation,
\begin{equation}\label{loc_enst_eq}
2\nu\iint|\nabla\otimes\omega|^2\phi\,d\bfx\,dt =
\iint|\omega|^2(\partial_t\phi+\nu\Delta\phi)\,d\bfx\,dt
+\iint|\omega|^2\bfu\cdot\nabla\phi\,d\bfx\,dt\;.
\end{equation}

We will make the following assumptions on the domain $\Omega$ and test functions $\phi$.

First, we assume there exists $R_0$ satisfying
\begin{equation}\label{omega_ass}
R_0>0\quad\mbox{such that}\quad B(\bfo,3R_0)\subset\Omega\;
\end{equation}
where $B(\bfo,3R_0)$ represents the ball in $\mR$ centered at the
origin and with the radius $3R_0$.

Next, let $1/2\le\delta<1$. Choose $\psi_0\in\mathcal{D}(B(\bfo,2R_0))$
satisfying
\begin{equation}\label{psi0}
0\le\psi_0\le 1,\quad\psi_0=1\ \mbox{on}\ B(\bfo,R_0),
\quad\frac{|\nabla\psi_0|}{\psi_0^{\delta}}\le\frac{C_0}{R_0},
\quad\frac{|\Delta\psi_0|}{\psi_0^{2\delta-1}}\le\frac{C_0}{R_0^2}\;.
\end{equation}
 For a $T>0$ (to be chosen later), $\bfx_0\in B(\bfo,R_0)$ and $0<R\le R_0$, define
$\phi=\phi_{\bfx_0,T,R}(t,\bfx)=\eta(t)\psi(\bfx)$ to be used in (\ref{loc_ene_ineq}) and (\ref{loc_enst_eq}) where $\eta=\eta_T(t)$
and $\psi=\psi_{\bfx_0,R}(\bfx)$ are refined cut-off functions
satisfying the following conditions,
\begin{equation}\label{eta_def}
\eta\in\mathcal{D}(0,2T),\quad 0\le\eta\le1,\quad\eta=1\ \mbox{on}\
(T/4,5T/4),\quad\frac{|\eta'|}{\eta^{\delta}}\le\frac{C_0}{T}\; ;
\end{equation}
if $B(\bfx_0,R)\subset B(\bfo,R_0)$, then
$\psi\in\mathcal{D}(B(\bfx_0,2R))$ with
\begin{equation}\label{psi_def}\begin{aligned}
\quad 0\le\psi\le\psi_0,\quad\psi=1\ \mbox{on}\
B(\bfx_0,R)\cap B(\bfo,R_0),
\quad\frac{|\nabla\psi|}{\psi^{\delta}}\le\frac{C_0}{R},
\quad\frac{|\Delta\psi|}{\psi^{2\delta-1}}\le\frac{C_0}{R^2}\;,
\end{aligned}
\end{equation}
and if  $B(\bfx_0,R)\not\subset B(\bfo,R_0)$, then
$\psi\in\mathcal{D}(B(\bfo,2R_0))$ with $\psi=1\ \mbox{on}\ B(\bfx_0,R)
\cap B(\bfo,R_0)$ satisfying, in addition to (\ref{psi_def}), the
following:
\begin{equation}\label{psi_def_add1}
\begin{aligned}
&
\psi=\psi_0\ \mbox{on the part of the cone in}\ \mR\ \mbox{centered at zero and passing through}\\
& S(\bfo,R_0)\cap B(\bfx_0,R)\ \mbox{between}\  S(\bfo,R_0)\ \mbox{and}\
s(\bfo,2R_0)
\end{aligned}
\end{equation}
and
\begin{equation}\label{psi_def_add2}
\begin{aligned}
&
\psi=0\ \mbox{on}\ B(\bfo,R_0)\setminus B(\bfx_0,2R)\ \mbox{and outside the part of the cone in}\ \mR\\
 &
 \mbox{centered at zero and passing through}\ S(\bfo,R_0)\cap B(\bfx_0,2R)\\
 &
 \mbox{between}\  S(\bfo,R_0)\ \mbox{and}\ S(\bfo,2R_0).
\end{aligned}
\end{equation}

Figure \ref{ball_fig} illustrates the definition of $\psi$ in the case $B(\bfx_0,R)$ is not entirely contained in
$B(\bfo,R_0)$.

\begin{figure}
  \centerline{\includegraphics[scale=1, viewport=188 449 493 669, clip] {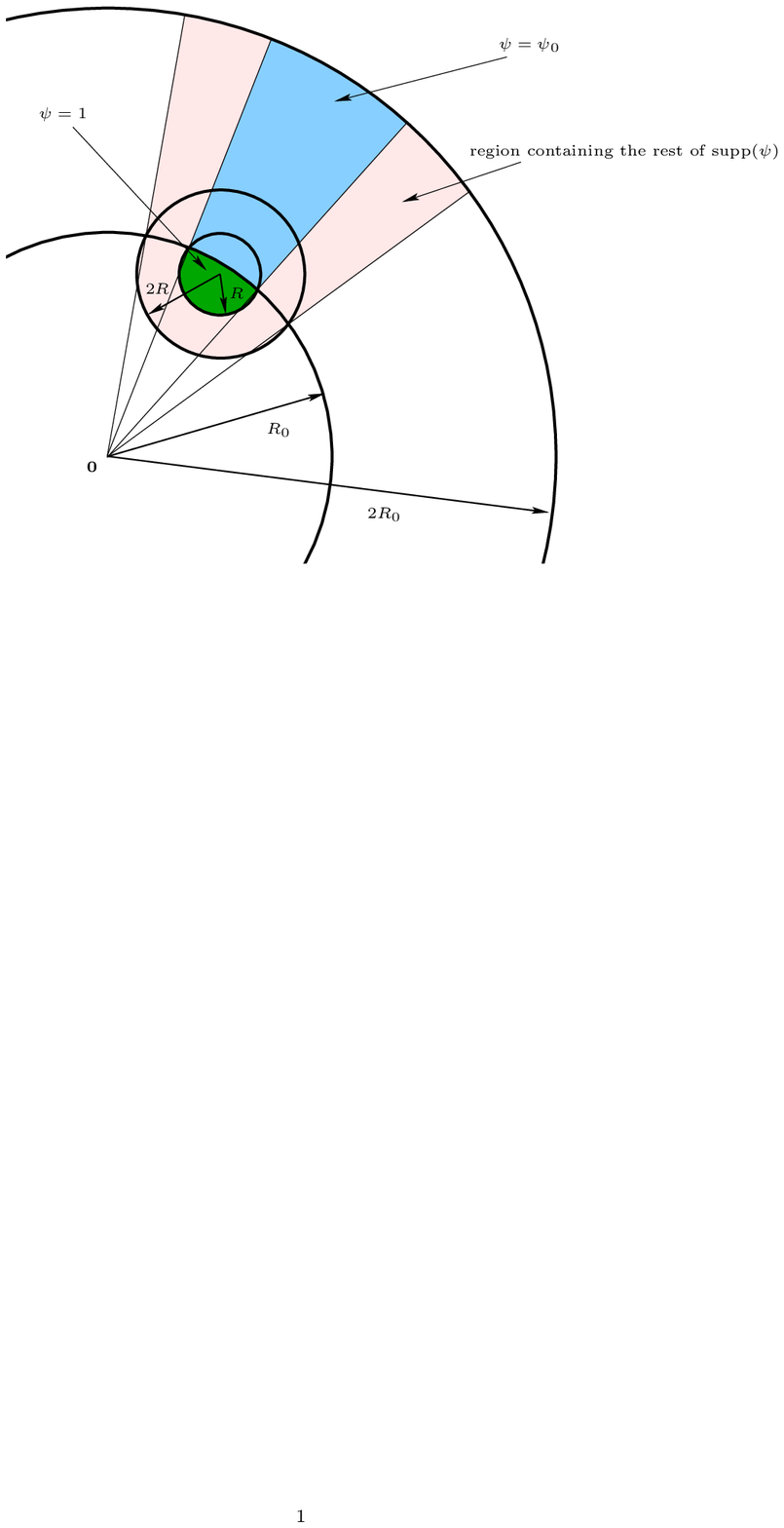}}
  \caption{Regions of supp$(\psi)$ in the case $B(\bfx_0,R)\not\subset B(\bfo,R_0)$.}
  \label{ball_fig}
\end{figure}

\begin{obs}{\em
The additional conditions on the boundary elements
(\ref{psi_def_add1}) and (\ref{psi_def_add2}) are necessary to
obtain the lower bound on the fluxes in terms of the same version of
the localized enstrophy $E$ in Theorems \ref{balls_thm} and
\ref{shells_thm} (see Remarks \ref{E'_rem1} and \ref{E'_rem3}).
}\end{obs}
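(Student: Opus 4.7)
The claim is one of \emph{necessity}: without (\ref{psi_def_add1}) and (\ref{psi_def_add2}) the lower bound on the enstrophy flux for a boundary ball $B(\bfx_0, R) \not\subset B(\bfo, R_0)$ would close only against a \emph{different} version of the localized enstrophy than the $E$ appearing in Theorems \ref{balls_thm} and \ref{shells_thm}. The plan is to trace the flux estimate in two scenarios and contrast them.

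First I would pin down the precise form of the $E$ used in those theorems. The powers $\psi^{\delta}$ and $\psi^{2\delta-1}$ in (\ref{psi_def}) indicate that $E$ is a weighted integral of $|\omega|^2$ over $\mathrm{supp}(\psi)$, with weight tied to one specific refined cut-off built from (\ref{psi0}). Next I would apply (\ref{loc_enst_eq}) with a general $\psi$ obeying only (\ref{psi_def}) and \emph{no} cone constraint, and isolate the contribution of the collar $\mathrm{supp}(\psi) \cap (B(\bfo, 2R_0) \setminus B(\bfo, R_0))$: since $\psi$ is unconstrained there, the resulting lower bound involves $|\omega|^2$ weighted by a function that need not coincide with the one prescribed by $E$, yielding a bound of the form $c \cdot \widetilde E(\bfx_0, R)$ with $\widetilde E \ne E$.

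I would then show that (\ref{psi_def_add1}), by forcing $\psi = \psi_0$ on the cone through $S(\bfo, R_0) \cap B(\bfx_0, R)$, aligns the collar weight with the $\psi_0$-weight already embedded in $E$, while (\ref{psi_def_add2}), by killing $\psi$ outside the slightly wider cone, prevents any spurious contribution from the rest of the annulus. Together they pin the right-hand side of the lower flux bound to the same $E$ used in the theorems, and the forward implication of the remark is established. For the converse direction implicit in ``necessary,'' the argument is that any $\psi$ producing the prescribed $E$ must agree with $\psi_0$ on a set containing the inner cone and vanish off the outer one, up to perturbations that do not affect (\ref{psi_def}).

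The main obstacle is geometric rather than analytic: one must construct, in polar coordinates centered at $\bfo$, a cut-off that equals $\psi_0$ on the inner cone, vanishes off the outer one, equals $1$ on $B(\bfx_0, R) \cap B(\bfo, R_0)$, and still satisfies $|\nabla\psi|/\psi^{\delta} \le C_0/R$ and $|\Delta\psi|/\psi^{2\delta-1} \le C_0/R^2$. The radial extrapolation on the collar naturally has derivative of order $1/R_0$, while the tangential direction inherits the $1/R$ scaling, and verifying that this anisotropy is compatible with (\ref{psi_def}) is the delicate step that justifies the particular cone description in (\ref{psi_def_add1})--(\ref{psi_def_add2}).
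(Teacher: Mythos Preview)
Your instinct that the collar $B(\bfo,2R_0)\setminus B(\bfo,R_0)$ and the matching with $\psi_0$ are the heart of the matter is correct, but you have mislocated where the argument actually lives. You propose to apply (\ref{loc_enst_eq}) to a \emph{single} boundary ball and track how the resulting lower bound changes when the cone constraints are dropped. But (\ref{loc_enst_eq}) for a single $\phi_i$ always yields $\Psi_{\bfx_i,R}=\nu P_{\bfx_i,R}-\frac{1}{T}\iint\frac{1}{2}|\omega|^2(\partial_t\phi_i+\nu\Delta\phi_i)$ regardless of (\ref{psi_def_add1})--(\ref{psi_def_add2}); those conditions play no role at the level of an individual ball.

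The place where the cone conditions enter is the \emph{ensemble} lower bound $P_R\ge \frac{1}{K_1}P_0$ (the palinstrophy case of Lemma~\ref{R_ave_lem}), which is what feeds into (\ref{low_bd_rel}) and then Theorem~\ref{balls_thm}. That bound needs $\sum_i\phi_i\ge\phi_0$ on all of $\mathrm{supp}(\phi_0)$, including the annulus. Condition (\ref{psi_def_add1}) forces the boundary $\psi_i$ to equal $\psi_0$ on the inner cone, so the sum dominates $\phi_0$ there; (\ref{psi_def_add2}) keeps the upper covering count under $K_2$. Without them the boundary $\psi_i$ need not reach into the annulus at all, and the only guaranteed inequality is $\sum_i\phi_i\ge \eta$ on $B(\bfo,R_0)$, which gives $P_R\ge\frac{1}{K_1}P'$ with the \emph{non-localized} $P'=\frac{1}{T}\int_0^{2T}\frac{1}{R_0^2}\int_{B(\bfo,R_0)}|\nabla\otimes\omega|^2\eta\,d\bfx\,dt$. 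This is exactly the content of Remarks~\ref{E'_rem1} and~\ref{E'_rem3}, and it is the whole justification the paper offers for the observation. Note also that the quantity at stake in Theorems~\ref{balls_thm} and~\ref{shells_thm} is the palinstrophy $P_0$ (resp.\ $\tilde P_0$), not the enstrophy; the remark's wording is loose on this point, and your proposal inherits that imprecision.

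Finally, the ``converse'' you sketch---that any $\psi$ reproducing the prescribed localized quantity must satisfy the cone conditions---is not what the paper asserts and is not true as stated. ``Necessary'' here is informal: it means only that the straightforward covering argument fails to deliver $P_0$ without these constraints, yielding $P'$ instead. There is no uniqueness claim about the cut-offs, and the geometric construction you worry about in the last paragraph is not part of what has to be proved.
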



\section{Averaged enstrophy and energy flux}

Let $\bfx_0\in B(\bfo,R_0)$ and $0<R\le R_0$. We define the
localized versions of energy, $e$, enstrophy, $E$, and palinstrophy,
$P$ at time $t$ associated to $B(\bfx_0,R)$ by
\begin{equation}\label{enerdef}
e_{\bfx_0,R}(t)=\int \frac{1}{2}|\bfu|^2\phi^{2\delta-1}\,d\bfx\;,
\end{equation}
\begin{equation}\label{enstdef}
E_{\bfx_0,R}(t)=\int \frac{1}{2}|\omega|^2\phi^{2\delta-1}\,d\bfx
\quad\left(\ \mbox{or}\  E'_{\bfx_0,R}(t)=\int \frac{1}{2}|\omega|^2\phi\,d\bfx
\;\right)\;,
\end{equation}
and
\begin{equation}\label{paltdef}
P_{\bfx_0,R}(t)=\int |\nabla\otimes\omega|^2\phi\,d\bfx\;.
\end{equation}

In the classical case, the total -- kinetic energy plus pressure --
flux through the sphere $S(\bfx_0,R)$ is given by
\[\int\limits_{S(\bfx_0,R)}(\frac{1}{2}|\bfu|^2+ p)\,\bfu\cdot{\bf{n}}\,ds=
\int\limits_{B(\bfx_0,R)}\left(\left(\bfu\cdot\nabla\right)\,\bfu+\nabla
p\right)\cdot\bfu\,dx\;\] where ${\bf{n}}$ is an outward normal to
the sphere $S(\bfx_0,R)$. Similarly, the enstrophy flux is given by
\[\int\limits_{S(\bfx_0,R)}\frac{1}{2}|\omega|^2\,\bfu\cdot{\bf{n}}\,ds=
\int\limits_{B(\bfx_0,R)}(\bfu\cdot\nabla){\omega}\,\cdot\omega\,dx\;.\]

Considering the NSE localized to $B(\bfx_0,R)$ leads to the
localized versions of the aforementioned fluxes,
\begin{equation}\label{en_fluxdef}
\Phi_{\bfx_0,R}(t)=\int
(\frac{1}{2}|\bfu|^2+p)\,\bfu\cdot\nabla\phi\,d\bfx\;
\end{equation}
and
\begin{equation}\label{fluxdef}
\Psi_{\bfx_0,R}(t)=\int
\frac{1}{2}|\omega|^2\,\bfu\cdot\nabla\phi\,d\bfx\;,
\end{equation}
where $\phi=\eta\psi$ with $\eta$ and $\psi$ as in
(\ref{eta_def}-\ref{psi_def}). Since $\psi$ can be constructed such
that $\nabla\phi=\eta\nabla \psi$ is oriented along the radial
directions of $B(\bfx_0,R)$ towards the center of the ball $\bfx_0$,
$\Phi_{\bfx_0,R}$ and $\Psi_{\bfx_0,R}$ can be viewed as the fluxes
{\em into} $B(\bfx_0,R)$ through the layer between the spheres
$S(\bfx_0,2R)$ and $S(\bfx_0,R)$ (in the case of the boundary
elements satisfying the additional hypotheses (\ref{psi_def_add1})
and (\ref{psi_def_add2}), $\psi$ is almost radial and the gradient
still points inward). In addition, (\ref{loc_ene_ineq}) and
(\ref{loc_enst_eq}) imply that positivity of these fluxes
contributes to the increase of $e_{\bfx_0,R}$ and $E_{\bfx_0,R}$,
respectively.

Note that the total energy flux $\Phi_{\bfx_0,R}$ consists of both
the kinetic and the pressure parts. Without imposing any specific
boundary conditions on $\Omega$ it is possible that the increase of
the kinetic energy around $\bfx_0$ is due solely to the pressure
part, without any transfer of the kinetic energy from larger scales
into $B(\bfx_0,R)$ (see \cite{DG1}). As we mentioned in the
introduction, under physical boundary conditions, like (\ref{BC}),
the increase of the kinetic energy in $B(\bfx_0,R)$ (and
consequently, the positivity of  $\Phi_{\bfx_0,R}$) implies local
transfer of the kinetic energy from larger scales simply because the
local kinetic energy is increasing while the global kinetic energy
is non-increasing resulting in decrease of the kinetic energy in the
complement. This is also consistent with the fact that in the
aforementioned scenarios one can project the NSE to the subspace of
divergence-free functions effectively eliminating the pressure and
revealing that the local flux  $\Phi_{\bfx_0,R}$ is indeed driven by
transport/inertial effects rather than the change in the pressure.

Henceforth, following the discussion in the preceding paragraph, in the setting of decaying turbulence (zero driving force, non-increasing global energy), the positivity and the negativity of  $\Phi_{\bfx_0,R}$ and  $\Psi_{\bfx_0,R}$ will be interpreted as transfer of (kinetic) energy and enstrophy around the point $\bfx_0$ at scale $R$ toward smaller scales and transfer of (kinetic) energy around the point $\bfx_0$ at scale $R$ toward larger scales, respectively.

For a quantity $\Theta=\Theta_{\bfx,R}(t)$, $t\in[0,2T]$ and a covering
$\{B(\bfx_i,R)\}_{i=1,n}$ of $B(\bfo,R_0)$ define a time-space
ensemble average
\begin{equation}
\lgl\Theta\rgl_R=\frac{1}{T}\int
\frac{1}{n}\sum\limits_{i=1}^{n}
\frac{1}{R^2}\Theta_{\bfx_i,R}(t)\,dt\;.
\end{equation}

Denote by
\begin{equation}\label{e_R_def}
e_R=\lgl e_{\bfx,R}(t)\rgl_R\;,
\end{equation}
\begin{equation}\label{E_R_def}
E_R=\lgl E_{\bfx,R}(t)\rgl_R\quad\left(\ \mbox{or}\ E'_R=\lgl E'_{\bfx,R}(t)\rgl_R\;\right)\;,
\end{equation}
\begin{equation}\label{P_R_def}
P_R=\lgl P_{\bfx,R}(t)\rgl_R\;,
\end{equation}
\begin{equation}\label{Phi_R_def}
\Phi_R=\lgl \Phi_{\bfx,R}(t)\rgl_R\;,
\end{equation}
and
\begin{equation}\label{Psi_R_def}
\Psi_R=\lgl \Psi_{\bfx,R}(t)\rgl_R\;,
\end{equation}
the averaged localized energy, enstrophy, palinstrophy, and inward-directed energy
and enstrophy fluxes over balls of radius $R$ covering $B(\bfo,R_0)$.

Also, introduce the time-space average of the localized energy, enstrophy and palinstrophy on
$B(\bfo,R_0)$,
\begin{equation}\label{e_def}
e_0=\frac{1}{T}\int
\frac{1}{R_0^2}e_{\bfo,R_0}(t)\,dt=\frac{1}{T}\frac{1}{R_0^2}\iint \frac{1}{2}|\bfu|^2\phi_0^{2\delta-1}\,d\bfx\,dt\;,
\end{equation}
\begin{equation}\label{E_def}
\begin{aligned}
&{E_0}=\frac{1}{T}\int
\frac{1}{R_0^2}E_{\bfo,R_0}(t)\,dt=\frac{1}{T}\frac{1}{R_0^2}
\iint \frac{1}{2} |\omega|^2\phi_0^{2\delta-1}\,d\bfx\,dt\;\\
&\left(\ \mbox{or}\  {E'_0}=\frac{1}{T}\int
\frac{1}{R_0^2}E_{\bfo,R_0}(t)\,dt=\frac{1}{T}\frac{1}{R_0^2}
\iint \frac{1}{2} |\omega|^2\phi_0\,d\bfx\,dt\;\right)\;,
\end{aligned}
\end{equation}
and
\begin{equation}\label{P_def}
{P_0}=\frac{1}{T}\int
\frac{1}{R_0^2}E_{\bfo,R_0}(t)\,dt=\frac{1}{T}\frac{1}{R_0^3}
\iint |\nabla\otimes\omega|^2\phi_0\,d\bfx\,dt\;
\end{equation}
where
\begin{equation}\label{phi0}
\phi_0(t,\bfx)=\eta(t)\psi_0(\bfx)
\end{equation}
with $\psi_0$ defined in (\ref{psi0}).

Finally, define Taylor and Kraichnan length scales associated with $B(\bfo,R_0)$ by
\begin{equation}\label{tau_def}
\tau_0=\left(\frac{e_0}{E'_0}\right)^{1/2}\;
\end{equation}
and
\begin{equation}\label{sigma_def}
\sigma_0=\left(\frac{E_0}{P_0}\right)^{1/2}\;.
\end{equation}

To obtain optimal estimates on the aforementioned fluxes we will
work with averages corresponding to {\em optimal} coverings of
$B(\bfo,R_0)$.

Let $K_1,K_2>1$ be absolute constants (independent of $R,R_0$, and
any of the parameters of the NSE).

\begin{defin}\label{opt_cover_def}
We say that a covering of $B(\bfo,R_0)$  by $n$ balls of radius $R$
is {\em optimal} if
\begin{equation}\label{n_con1}
\left(\frac{R_0}{R}\right)^2\le n\le K_1\left(\frac{R_0}{R}\right)^2;
\end{equation}
\begin{equation}\label{n_con2}
\mbox{any}\  \bfx\in B(\bfo,R_0)\  \mbox{is covered by at most}\ K_2 \
\mbox{balls}\ B(\bfx_i,2R)\,.
\end{equation}
\end{defin}

Note that optimal coverings exist for any $0<R\le R_0$ provided
$K_1$ and $K_2$ are large enough. In fact, the choice of $K_1$ and
$K_2$ depends only on the dimension of $\mR$, e.g, we can choose
$K_1=K_2=8$.

Henceforth, we assume that the averages $\lgl\cdot\rgl_R$ are taken
with respect to optimal coverings.

The key observation about these optimal coverings is contained in the following lemma.

\begin{lem}\label{R_ave_lem}
If the covering $\{B(\bfx_i,R)\}_{i=1,n}$ of $B(\bfo,R_0)$ is optimal then the averages
$e_R$, $E_R$, and $P_R$
satisfy
\begin{equation}\label{R_ave_est}
\begin{aligned}
&\frac{1}{K_1}e_0 \le e_R \le K_2e_0\;,\\
&\frac{1}{K_1}E_0 \le E_R \le K_2E_0
\quad\left(\; \frac{1}{K_1}E'_0 \le E'_R \le K_2E'_0\;\right)\;,\\
&\frac{1}{K_1}P_0 \le P_R \le K_2P_0\;.
\end{aligned}
\end{equation}
\end{lem}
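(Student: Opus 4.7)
The plan is to reduce each of the six inequalities to a pointwise comparison between $\sum_i \psi_i^{2\delta-1}$ and $\psi_0^{2\delta-1}$ (for $e_R$ and $E_R$) or between $\sum_i \psi_i$ and $\psi_0$ (for $E'_R$ and $P_R$), and then transfer that comparison into the integral definitions, absorbing the combinatorial factor $1/(nR^2)$ against $1/R_0^2$ via the defining bounds $(R_0/R)^2 \le n \le K_1(R_0/R)^2$ from (\ref{n_con1}).

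For the upper bounds I would use two ingredients. First, (\ref{psi_def})--(\ref{psi_def_add2}) force $\psi_i\le\psi_0$ pointwise, and combined with $\delta\ge1/2$ and $0\le\psi_0\le1$ this implies $\psi_i^{2\delta-1}\le\psi_0^{2\delta-1}$. Second, the multiplicity condition (\ref{n_con2}) controls how many $\psi_i$ are nonzero at any point $\bfy\in B(\bfo,R_0)$; for $\bfy$ in the shell $B(\bfo,2R_0)\setminus B(\bfo,R_0)$, the support description in (\ref{psi_def_add2}) shows that $\bfy\in\mathrm{supp}\,\psi_i$ forces the radial projection $R_0\bfy/|\bfy|$ to lie in $B(\bfx_i,2R)$, so the multiplicity at $\bfy$ reduces to the multiplicity at a point of $S(\bfo,R_0)$ and is again bounded by $K_2$. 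Combined, these give $\sum_i \psi_i^{2\delta-1}\le K_2\,\psi_0^{2\delta-1}$ pointwise. Inserting this into the definition of $e_R$, factoring out the common $\eta(t)$, and using $1/n\le (R/R_0)^2$ produces $e_R\le K_2\, e_0$.

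For the lower bounds, the covering property from (\ref{n_con1}) guarantees that every $\bfy\in B(\bfo,R_0)$ lies in some $B(\bfx_i,R)$, and on that ball (\ref{psi_def}) forces $\psi_i(\bfy)=1=\psi_0(\bfy)$. For $\bfy$ in the shell, the projection $R_0\bfy/|\bfy|$ is covered by some $B(\bfx_j,R)$; this ball meets $S(\bfo,R_0)$ and hence is not contained in $B(\bfo,R_0)$, so (\ref{psi_def_add1}) gives $\psi_j(\bfy)=\psi_0(\bfy)$ directly on the cone through the projected point. In either case a single term of the sum already matches $\psi_0$, so $\sum_i \psi_i^{2\delta-1}\ge\psi_0^{2\delta-1}$ and $\sum_i \psi_i\ge\psi_0$ pointwise. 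Combined with $1/n\ge(1/K_1)(R/R_0)^2$, this produces $e_R\ge (1/K_1)\,e_0$.

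The arguments for $E_R$, $E'_R$, and $P_R$ proceed identically, with $\tfrac12|\bfu|^2$ replaced by $\tfrac12|\omega|^2$ or $|\nabla\otimes\omega|^2$; the cases $E'_R$ and $P_R$ are in fact slightly simpler because the integrand is linear in $\phi$ and the exponent $2\delta-1$ does not appear. The main obstacle is the geometric bookkeeping on the cone regions in the shell: one must verify that the radial extensions prescribed by (\ref{psi_def_add1})--(\ref{psi_def_add2}) neither inflate the multiplicity beyond $K_2$ in the upper-bound step nor leave gaps in the pointwise comparison with $\psi_0$ in the lower-bound step. Once the radial-projection argument settles these two geometric checks, the remainder is direct substitution and constant bookkeeping.
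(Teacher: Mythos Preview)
Your approach is essentially identical to the paper's: both arguments establish the pointwise comparisons $\sum_i \psi_i^{2\delta-1}\le K_2\,\psi_0^{2\delta-1}$ and $\sum_i \psi_i^{2\delta-1}\ge\psi_0^{2\delta-1}$ (and their linear analogues) and then absorb the factor $1/(nR^2)$ into $1/R_0^2$ via the two-sided bound on $n$ in (\ref{n_con1}). The paper's proof is more terse and simply invokes (\ref{n_con2}) and non-negativity without spelling out the radial-projection argument for the shell region; your discussion of the cone geometry in (\ref{psi_def_add1})--(\ref{psi_def_add2}) makes explicit the bookkeeping that the paper leaves implicit.
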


\begin{proof}
Note that since the integrand is non-negative, using (\ref{n_con2})
and the lower bound in (\ref{n_con1}) we obtain
\[
\begin{aligned}
e_R&=\frac{1}{T}\frac{1}{R^2}
\frac{1}{n}\sum\limits_{i=1}^{n}
\iint \frac{|\bfu|^2}{2}\phi_i^{2\delta-1}\,d\bfx dt\le \frac{1}{T}\frac{1}{R^2}
\frac{1}{n} K_2\iint \frac{|\bfu|^2}{2}\phi_0^{2\delta-1}\,d\bfx dt\\
&\le
 K_2\frac{1}{T}\frac{1}{R^2}\left(\frac{R}{R_0}\right)^2\iint \frac{|\bfu|^2}{2}\phi_0^{2\delta-1}\,d\bfx dt=
 K_2e_0\;.
 \end{aligned}
\]
Next, we use the upper bound in (\ref{n_con1}) and the
non-negativity of the integrand to bound $e_R$ from below,
\[
\begin{aligned}
e_R&=\frac{1}{T}\frac{1}{R^3}
\frac{1}{n}\sum\limits_{i=1}^{n}
\iint \frac{|\bfu|^2}{2}\phi_i^{2\delta-1}\,d\bfx dt\ge\frac{1}{T}\frac{1}{R^3}
\frac{1}{n}\iint \frac{|\bfu|^2}{2}\phi_0^{2\delta-1}\,d\bfx dt\\
&\ge
\frac{1}{T}\frac{1}{R^3}\frac{1}{K_1}\left(\frac{R}{R_0}\right)^2
\iint \frac{|\bfu|^2}{2}\phi_i^{2\delta-1}\,d\bfx dt=
 \frac{1}{K_2}e_0\;,
 \end{aligned}
\]
arriving at the first relation of (\ref{R_ave_est}). The other two
relations are proved in a similar manner.

\end{proof}

Note that the lemma above shows that for the the non-negative
quantities, like energy, enstrophy, and palinstrophy, the ensemble
averages over the balls of size $R$, $e_R$, $E_R$, and $P_R$ are
comparable to the total space-time average. This is not so for the
quantities that change signs, like the energy and enstrophy fluxes.
In fact $\Phi_R$ and $\Psi_R$ provide a meaningful information as to
energy and enstrophy transfers into balls of size $R$. Positivity of
$\Psi_R$, for example, implies that there are at least some regions
of size $R$ for which the enstrophy flows inwards.

Moreover, note that the space-time ensemble  averages of energy, enstrophy, and palinstrophy
that correspond to these optimal coverings (over finite number of balls) are equivalent
to the uniform space-time average. We define the uniform space-time average of
$\Theta=\Theta_{\bfx,R}(t)$ as
\begin{equation}\label{unif_ave_def}
\Theta^u_R=\frac{1}{T}\frac{1}{R_0^2}\int\limits_{B(\bfo,R_0)}\int\limits_0^{2T} \frac{1}{R^2}\Theta_{\bfx,R}(t)\, d\bfx dt\;;
\end{equation}
thus we have the following uniform averages of energy, enstrophy,
palinstrophy and fluxes in regions of size $R$: $e^u_R$, $E^u_R$
($E'^u_R$), $P^u_R$, $\Phi^u_R$ and $\Psi^u_R$.

\begin{lem}\label{U_ave_lem}
The following estimates hold
\begin{equation}\label{unif_ave_est}
\begin{aligned}
&\frac{1}{2^2}e_0\le e^u_R \le 4^2 e_0\;,\\
&\frac{1}{2^2}E_0\le E^u_R \le 4^2 E_0
\quad\left(\; \frac{1}{2^2}E'_0\le E'^u_R \le 4^2 E'_0\;\right)\;,\\
&\frac{1}{2^2}P_0\le P^u_R \le 4^2 P_0\;.
\end{aligned}
\end{equation}
\end{lem}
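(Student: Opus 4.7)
I would prove Lemma \ref{U_ave_lem} by a Fubini argument that interchanges the outer spatial average over $\bfx\in B(\bfo,R_0)$ with the spatial integral hidden inside $e_{\bfx,R}(t)$ (and analogously for $E$, $E'$, $P$), writing
\[
e^u_R\;=\;\frac{1}{TR_0^2R^2}\iint \tfrac{1}{2}|\bfu(t,\bfy)|^2\,\eta^{2\delta-1}(t)\,K(\bfy)\,d\bfy\,dt,\qquad K(\bfy):=\int_{B(\bfo,R_0)}\psi_{\bfx,R}^{2\delta-1}(\bfy)\,d\bfx.
\]
The entire argument then reduces to sandwiching the kernel $K(\bfy)$ between constant multiples of $R^2\,\psi_0^{2\delta-1}(\bfy)$, in a spirit parallel to how Lemma \ref{R_ave_lem} used the covering-number bounds \eqref{n_con1}--\eqref{n_con2}.

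For the upper bound, I would combine $\psi_{\bfx,R}\le\psi_0$ with $2\delta-1\ge 0$, so $\psi_{\bfx,R}^{2\delta-1}\le\psi_0^{2\delta-1}$ pointwise, with the support conditions \eqref{psi_def}--\eqref{psi_def_add2} to conclude that $\psi_{\bfx,R}(\bfy)\ne 0$ forces $\bfx$ to lie in a set of 2D Lebesgue measure $\lesssim R^2$: for $\bfy\in B(\bfo,R_0)$ this set is contained in $B(\bfy,2R)$, while for $\bfy$ in the annulus the extra cone support of \eqref{psi_def_add1} only adds a tube of comparable area around the radial projection $\bfy^{*}:=R_0\bfy/|\bfy|$. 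Summing the two yields $K(\bfy)\le 16\,R^2\,\psi_0^{2\delta-1}(\bfy)$, and hence $e^u_R\le 16\,e_0$.

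For the lower bound, in the easy case $\bfy\in B(\bfo,R_0)$ I would use $\psi_{\bfx,R}^{2\delta-1}(\bfy)\ge \mathbf{1}_{B(\bfx,R)\cap B(\bfo,R_0)}(\bfy)$ (from $\psi_{\bfx,R}\equiv 1$ on that set) together with convexity of $B(\bfo,R_0)$ to obtain $K(\bfy)\ge |B(\bfy,R)\cap B(\bfo,R_0)|\ge \tfrac{\pi}{4}R^2=\tfrac{\pi}{4}R^2\,\psi_0^{2\delta-1}(\bfy)$, since $\psi_0\equiv 1$ on $B(\bfo,R_0)$. For $\bfy$ in the annulus, the boundary extension \eqref{psi_def_add1} becomes essential: for every $\bfx\in B(\bfy^{*},R/2)\cap B(\bfo,R_0)$, the ball $B(\bfx,R)$ contains $\bfy^{*}\in S(\bfo,R_0)$ and so is not contained in $B(\bfo,R_0)$, and \eqref{psi_def_add1} then forces $\psi_{\bfx,R}(\bfy)=\psi_0(\bfy)$ because $\bfy$ lies on the ray from the origin through $\bfy^{*}$, inside the prescribed cone. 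The admissible centers $\bfx$ form a set of measure $\gtrsim R^2$ by convexity of $B(\bfo,R_0)$ near $\bfy^{*}$, yielding $K(\bfy)\gtrsim R^2\,\psi_0^{2\delta-1}(\bfy)$ throughout $B(\bfo,2R_0)$; Fubini then gives $e^u_R\ge \tfrac{1}{4}e_0$.

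The main obstacle is exactly this boundary bookkeeping: verifying that the cone construction in \eqref{psi_def_add1} really does supply an $R^2$-worth of ball-centers $\bfx$ for which $\psi_{\bfx,R}$ coincides with $\psi_0$ along the radial direction through each $\bfy$ in the annulus. Once that geometric claim is in place, the rest is direct. The $E^u_R$, $E'^u_R$, and $P^u_R$ estimates follow by the identical Fubini scheme, replacing $|\bfu|^2$ with $|\omega|^2$ or $|\nabla\otimes\omega|^2$ and, for $E'$ and $P$, the weight $\phi^{2\delta-1}$ with $\phi$ (for which $\phi\le\phi_0$ holds just as well, so the same kernel bounds on $K(\bfy)$ transfer verbatim).
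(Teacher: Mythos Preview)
Your proposal is correct and takes a genuinely different route from the paper. The paper does \emph{not} use Fubini; instead it discretizes the $\bfx$-integral by a Riemann-sum argument: it covers $B(\bfo,R_0)$ by cubic cells of side $R/2$, picks in each cell a point $\ubx_i$ (resp.\ $\lbx_i$) where the cell-localized energy $F(\bfx)=\frac{1}{TR^2}\iint\frac{1}{2}|\bfu|^2\phi_{\bfx,R}^{2\delta-1}$ is within $\epsilon/2^i$ of its cell supremum (resp.\ infimum), observes that the resulting families $\{B(\ubx_i,R)\}$ and $\{B(\lbx_i,R)\}$ are optimal coverings in the sense of Definition~\ref{opt_cover_def}, and then invokes the same counting bounds \eqref{n_con1}--\eqref{n_con2} that drove Lemma~\ref{R_ave_lem}. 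Your Fubini/kernel approach is more direct and makes the geometric content transparent: the two bounds on $K(\bfy)$ are precisely the continuous analogues of the covering multiplicity bound \eqref{n_con2} and the covering cardinality bound \eqref{n_con1}, respectively. The paper's approach has the virtue of literally reducing the uniform average to a discrete optimal-covering average, so that Lemma~\ref{U_ave_lem} becomes a corollary of the mechanism behind Lemma~\ref{R_ave_lem}; your approach bypasses the discretization entirely but requires the explicit annulus bookkeeping you flagged (which the paper sidesteps by pushing that work into the construction \eqref{psi_def_add1}--\eqref{psi_def_add2} of the covering elements). One minor caution: your constants in the annulus lower bound (the area of $B(\bfy^*,R/2)\cap B(\bfo,R_0)$) come out closer to $\pi R^2/16$ than to $R^2/4$, so matching the \emph{exact} constants $\tfrac{1}{4}$ and $16$ stated in the lemma would need a slightly sharper choice of admissible centers; the paper's own proof is also loose on these constants, so this is not a real defect.
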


\begin{proof}
We will prove the first relation in (\ref{unif_ave_est}), the others follow in a similar way.

Note that the definition of uniform average applied to the energy $e_{\bfx,R}(t)$ yeilds
\[
e^u_R = \frac{1}{R_0^2} \int\limits_{B{\bfx_0}} \left(
\frac{1}{T}\frac{1}{R^2}\iint \frac{|\bfu|^2}{2}\phi_{{\bf y},R}\,d{\bf x} dt
\right)\, d{\bf y}\;.
\]
Denote
\[F(\bfy)=\frac{1}{T}\frac{1}{R^2}\iint \frac{|\bfu|^2}{2}\phi_{{\bf y},R}\,d{\bf y} dt.\]
Observe that since the solution $\bfu$ is continuous, $F:B(\bfo,R_0)\to\mathbb{R}$ is continuous as well.

Cover $B(\bfo,R_0)$ in $n$ cubic cells, $\{C_i\}$ of linear size $R/2$. Note that
\[4\le n\le 8\]
and the area of a cell $C_i$ is
\[\mbox{A}(C_i)=\frac{R^2}{4}\;.\]
If a cell intersects the sphere $S(\bfo, R_0)$, we extend $F$ to the whole cell by setting $F(\bfy)=0$ on $C_i\setminus B(\bfo,R_0)$.
Naturally, this extension makes $F$ is measurable (but not necessarily continuous) on $\cup C_i$.

Let $\epsilon>0$. Since $F$ is bounded, there exist $\ubx_i,\lbx_i\in C_i$ such that
\[F(\ubx_i)\ge \sup\limits_{C_i}F-\frac{\epsilon}{2^i}\quad \mbox{and}\quad
F(\lbx_i)\le \inf\limits_{C_i}F+\frac{\epsilon}{2^i}\;.
\]

Consequently,
\[\begin{aligned}
\frac{1}{R_0^2}\int\limits_{B(\bfo,R_0)} F(\bfy)\, d\bfy & =\frac{1}{R_0^2}\int\limits_{\cup C_i} F(\bfy)\, d\bfy
\le\frac{1}{R_0^2}\sum\limits_{i=1}^{n}\left(F(\ubx_i)+\frac{\epsilon}{2^i}\right)A(C_i)\\ &\le
\frac{1}{4}\left(\frac{R}{R_0}\right)^2\sum\limits_{i=1}^{n}F(\ubx_i)+\frac{1}{4}\left(\frac{R}{R_0}\right)^2\epsilon\;.
\end{aligned}
\]
Note that since $F\ge 0$ and $F=0$ outside $B(\bfo,R_0)$, without loss of generality we may assume $\ubx_i\in B(\bfo,R_0)$.
Moreover, the balls $\{B(\ubx_i,R)\}$ form an optimal covering of $B(\bfo, R_0)$ in the sense of Definition \ref{opt_cover_def} with
$K_2=8^2$. Thus, 
\[
\sum\limits_{i=1}^{n} R^2 F(\ubx_i)=\sum\limits_{i=1}^{n}\frac{1}{T}\iint \frac{|\bfu|^2}{2}\phi_{\ubx_i,R}\, d\bfx dt
\le K_2 \frac{1}{T}\iint \frac{|\bfu|^2}{2}\phi_0\, d\bfx dt = K_2R_0^2 e_0\;,
\]
and so
\[
e^u_R=\frac{1}{R_0}^2 \int\limits_{B(\bfo,R_0)} F(\bfy)\, d\bfy\le \frac{K_2}{4}e_0+\frac{1}{4}\left(\frac{R}{R_0}\right)^2\epsilon\;,
\]
for any $\epsilon>0$, which implies the upper bound in the first relation in (\ref{unif_ave_est}).

To obtain the lower bound, proceed similarly,
\[\begin{aligned}
\frac{1}{R_0^2}\int\limits_{B(\bfo,R_0)} F(\bfy)\, d\bfy & =\frac{1}{R_0^2}\int\limits_{\cup C_i} F(\bfy)\, d\bfy
\ge\frac{1}{R_0^2}\sum\limits_{i=1}^{n}\left(F(\lbx_i)-\frac{\epsilon}{2^i}\right)A(C_i)\\ &\ge
\frac{1}{4}\left(\frac{R}{R_0}\right)^2\sum\limits_{i=1}^{n}F(\lbx_i)-\frac{1}{4}\left(\frac{R}{R_0}\right)^2\epsilon\;.
\end{aligned}
\]
Note that even if $\lbx_i\not\in B(\bfo,R_0)$, we still can choose $\psi_{\lbx_i,R}$ satisfying (\ref{psi_def})-(\ref{psi_def_add2})
and so the supports of $\psi_{\lbx_i,R}$ will still cover $B(\bfo,R_0)$ and
\[
\sum\limits_{i=1}^{n} R^2 F(\lbx_i)=\sum\limits_{i=1}^{n}\frac{1}{T}\iint \frac{|\bfu|^2}{2}\phi_{\lbx_i,R}\, d\bfx dt
\ge \frac{1}{T}\iint \frac{|\bfu|^2}{2}\phi_0\, d\bfx dt = R_0^2 e_0\;.
\]

Consequently,
\[
e^u_R=\frac{1}{R_0^2} \int\limits_{B(\bfo,R_0)} F(\bfy)\, d\bfy\ge \frac{1}{4}e_0-\frac{1}{4}\left(\frac{R}{R_0}\right)^2\epsilon\;,
\]
and, since $\epsilon>0$ is arbitrary, we obtain the lower bound in the first relation of (\ref{unif_ave_est}).
\end{proof}

The lemma above allows us to to note that the estimates for the optimal ensemble
averages, $\lgl\cdot\rgl_R=\frac{1}{n}\sum_{i=1}^n\cdot\;$ that will follow
will also be valid for the uniform averages,
$\lgl\cdot\rgl_U=\frac{1}{R_0^2}\int_{B({\bf 0},R_0)}\cdot\; d\bfx$.


\section{Enstrophy  cascade}\label{balls}

Let $\{B(\bfx_i,R)\}_{i=1,n}$ be an optimal covering of $B(\bfo,R_0)$.

Note that the local enstrophy equation (\ref{loc_enst_eq}) and the
definitions of $P_R$ and $\Psi_R$
(\,see (\ref{P_R_def}) and (\ref{Psi_R_def})\,)
imply

\begin{equation}\label{ene-eq}
\Psi_R= \nu P_R -
\frac{1}{n}\sum\limits_{i=1}^{n}\frac{1}{T}\frac{1}{R^3}\iint\frac{1}{2}
|\omega|^2(\partial_t\phi_i+\nu\Delta\phi_i)\,d\bfx\,dt\;
\end{equation}
where $\phi_i=\eta\psi_i$ and $\psi_i=\psi_{\bfx_i,R}$ is the spatial cut-off on
$B(\bfx_i,2R)$ satisfying (\ref{eta_def}-\ref{psi_def_add2}).

If
\begin{equation}\label{T_con}
T\ge \frac{R_0^2}{\nu},
\end{equation}
then for any $0<R\le R_0$,

\begin{equation}\label{phi_bd}
\begin{aligned}
|(\phi_i)_t|&=|\eta_t\psi_i|\le C_0\frac{1}{T}\eta^{\delta}\psi_i\le
\nu\frac{C_0}{R^2}\phi_i^{2\delta-1}\,,\\
\nu|\Delta\phi_i|&=\nu|\eta\Delta\psi_i|\le C_0\frac{\nu}
{R^2}\eta\psi_i^{2\delta-1}\le\nu\frac{C_0}{R^2}\phi_i^{2\delta-1};
\end{aligned}
\end{equation}
hence,
\[\Psi_R\ge \nu P_R -\nu \frac{C_0}{R^2}\,E_R.\]

Using (\ref{R_ave_est}) we obtain
\begin{equation}\label{low_bd_rel}
\Psi_R\ge \nu \frac{1}{K_1}P_0 -\nu \frac{C_0K_2}{R^2}\,E_0\;
\end{equation}
leading to the following proposition.

\begin{prop}
\begin{equation}\label{low_bd}
\Psi_R\ge c_1\nu P_0\,\left(1-c_2\frac{\sigma_0^2}{R^2}\right)
\end{equation}
with $c_1=1/K_1$ and $c_2=C_0K_1K_2$ (provided conditions
(\ref{n_con1}-\ref{n_con2}) are satisfied).
\end{prop}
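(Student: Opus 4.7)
The plan is straightforward: the proposition is essentially an algebraic rewriting of the already-derived inequality (\ref{low_bd_rel}) using the definition of the Kraichnan scale $\sigma_0$ in (\ref{sigma_def}). So I would not reprove (\ref{low_bd_rel}) from scratch; rather, I would treat the preceding display as the main content and then perform a short factoring.

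First, I would recall the derivation just completed in the excerpt: starting from the local enstrophy equation (\ref{loc_enst_eq}) with $\phi=\phi_i=\eta\psi_i$, summing/averaging over an optimal cover $\{B(\bfx_i,R)\}$, and using the definitions (\ref{P_R_def}), (\ref{Psi_R_def}) one obtains the identity (\ref{ene-eq}). Under the time-scale hypothesis (\ref{T_con}) the cutoff estimates (\ref{phi_bd}) give pointwise bounds $|\partial_t\phi_i|+\nu|\Delta\phi_i|\le 2\nu C_0 R^{-2}\phi_i^{2\delta-1}$, so the second term on the right of (\ref{ene-eq}) is bounded in absolute value by $\nu C_0 R^{-2}E_R$. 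Hence $\Psi_R\ge \nu P_R-\nu C_0R^{-2}E_R$, and an application of Lemma \ref{R_ave_lem} (lower bound on $P_R$, upper bound on $E_R$) gives (\ref{low_bd_rel}).

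Second, I would invoke the definition $\sigma_0^2=E_0/P_0$ from (\ref{sigma_def}) to replace $E_0$ by $\sigma_0^2 P_0$ in (\ref{low_bd_rel}). Factoring out $\nu P_0/K_1$ yields
\[
\Psi_R\ge \frac{1}{K_1}\,\nu P_0\left(1-C_0K_1K_2\,\frac{\sigma_0^2}{R^2}\right),
\]
which is the claimed inequality with $c_1=1/K_1$ and $c_2=C_0K_1K_2$.

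I do not anticipate a real obstacle here, since all the analytic work (the cutoff calculus, the optimal-covering averages, and the dissipative balance) has already been carried out above the statement. The only thing worth flagging is that the proposition is meaningful only in the regime $R\gg \sigma_0$, where the factor in parentheses is bounded away from zero; this is precisely the regime in which a direct enstrophy cascade is expected, and it motivates the sufficient condition (\ref{scales_con_fin}) announced in the introduction.
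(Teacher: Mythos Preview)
Your proposal is correct and follows exactly the paper's own argument: the proposition is simply the rewriting of the already-established inequality (\ref{low_bd_rel}) obtained by substituting $E_0=\sigma_0^2 P_0$ from (\ref{sigma_def}) and factoring out $\nu P_0/K_1$. There is nothing to add; the analytic content (the cutoff bounds (\ref{phi_bd}) and the optimal-covering estimates of Lemma~\ref{R_ave_lem}) has indeed been fully established in the text preceding the statement.
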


Suppose that
\begin{equation}\label{scales_con}
\sigma_0< \frac{\gamma}{c_2^{1/2}}R_0
\end{equation}
for some $0<\gamma<1$. Then, for any $R$,
$(c_2^{1/2}/\gamma)\,\tau_0 \le R \le R_0$,
\begin{equation}\label{lower_bd}
\Psi_R\ge{c_1}(1-\gamma^2)\nu E_0=c_{0,\gamma}\nu E_0\;
\end{equation}
where
\begin{equation}
c_{0,\gamma}={c_1}(1-\gamma^2)=\frac{1-\gamma^2}{K_1}\;.
\end{equation}

To obtain an upper bound on the averaged modified flux, note that
from (\ref{R_ave_est}),
$P_R\le{K_2}P_0$, and hence, (\ref{ene-eq}) implies
\[\Psi_R\le \nu P_R+\frac{C_0}{R^2}E_R\le\nu K_2P_0+\nu C_0K_2\frac{1}{R^2}\,E_0.\]
If the condition (\ref{scales_con}) holds for some $0<\gamma<1$,
then it follows that for any $R$, $({c_2}^{1/2}/{\gamma})\,\tau_0
\le R\le R_0$,
\begin{equation}
\Psi_R\le \nu K_2 P_0+\nu \frac{C_0K_2\gamma^2}{c_2}P_0\le
c_{1,\gamma}\nu P_0\;
\end{equation}
where
\begin{equation}
c_{1,\gamma}=K_2 \left[1+\frac{C_0\gamma^2}{c_2}\right]=K_2
\left[1+\frac{\gamma^2}{K_1K_2}\right]\;.
\end{equation}

Thus we have proved the following.

\begin{thm}\label{balls_thm}
Assume that for some $0<\gamma<1$
\begin{equation}\label{scales_con_fin}
\sigma_0< c{\gamma}\,R_0\;,
\end{equation}
where
\begin{equation}\label{c_con1}
c=\frac{1}{\sqrt{C_0K_1K_2}}\;.
\end{equation}
Then, for all $R$,
\begin{equation}\label{inert_range}
\frac{1}{c\gamma}\,\sigma_0\le R\le R_0,
\end{equation}
the averaged enstrophy flux $\Psi_R$ satisfies
\begin{equation}\label{ener_casc}
c_{0,\gamma}\nu P_0\le\Psi_R\le c_{1,\gamma} \nu P_0\;
\end{equation}
where
\begin{equation}\label{c_con2}
c_{0,\gamma}=\frac{1-\gamma^2}{K_1}\,, \quad
c_{1,\gamma}=K_2 \left[1+\frac{\gamma^2}{K_1K_2}\right]\;,
\end{equation}
and the average $\lgl\cdot\rgl_R$ is computed over a time interval
$[0,T]$ with $T\ge R_0^2/\nu$ and determined by an optimal covering
of $B(\bfo,R_0)$ (i.e., a covering satisfying (\ref{n_con1}) and
(\ref{n_con2})).
\end{thm}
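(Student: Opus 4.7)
The plan is to consolidate the computation already performed in the paragraphs immediately preceding the statement; at this point the theorem is essentially packaging.

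First I will sum the local enstrophy equation (\ref{loc_enst_eq}) over the balls of an optimal covering of $B(\bfo,R_0)$ with the refined cut-offs $\phi_i=\eta\psi_i$ and pass to the ensemble average, producing the identity (\ref{ene-eq}) that expresses $\Psi_R$ as $\nu P_R$ minus an error term driven by $|\omega|^2(\partial_t\phi_i+\nu\Delta\phi_i)$. Next I will estimate that error term: the time condition $T\ge R_0^2/\nu$ together with the refined bounds (\ref{eta_def})--(\ref{psi_def}) yields the pointwise inequalities (\ref{phi_bd}), so both $|\partial_t\phi_i|$ and $\nu|\Delta\phi_i|$ are controlled by $\nu C_0 R^{-2}\phi_i^{2\delta-1}$. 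Since this is precisely the weight in the definition (\ref{enstdef}) of $E_{\bfx_i,R}$, the error term is dominated, in absolute value, by $\nu C_0 R^{-2} E_R$.

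Once the two-sided estimate $\nu P_R - \nu C_0 R^{-2} E_R \le \Psi_R \le \nu P_R + \nu C_0 R^{-2} E_R$ is in hand, I apply Lemma~\ref{R_ave_lem} to replace $P_R$ and $E_R$ by $P_0$ and $E_0$ at the cost of the universal constants $K_1$ and $K_2$, obtaining
\[
\frac{\nu}{K_1}P_0 - \frac{\nu C_0 K_2}{R^2} E_0 \;\le\; \Psi_R \;\le\; \nu K_2 P_0 + \frac{\nu C_0 K_2}{R^2} E_0.
\]
Finally, using $\sigma_0^2 = E_0/P_0$ and the hypothesis $\sigma_0 < c\gamma R_0$ with $c=(C_0 K_1 K_2)^{-1/2}$, the range $R\ge c^{-1}\gamma^{-1}\sigma_0$ forces $C_0 K_1 K_2\,\sigma_0^2/R^2\le\gamma^2$, i.e., $C_0 K_2\,E_0/R^2 \le \gamma^2 P_0/K_1$. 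Plugging this into the two-sided bound yields the inequality (\ref{ener_casc}) with the constants (\ref{c_con2}).

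There is no genuine obstacle: the only subtle ingredient is the matching between the weight $\phi^{2\delta-1}$ arising in the derivative bounds (\ref{phi_bd}) and the same weight built into the definitions of $e$ and $E$, which is precisely the reason for setting up the refined cut-offs the way (\ref{eta_def})--(\ref{psi_def}) do. If I had to flag a place that requires care, it would be the book-keeping of the powers of $R$ and the normalizations that distinguish the ensemble average $\langle\cdot\rangle_R$ from the pointwise localized quantities; once those factors are tracked correctly, the proof reduces to the direct chain of inequalities described above.
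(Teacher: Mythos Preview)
Your proposal is correct and follows essentially the same route as the paper: sum the local enstrophy identity over an optimal covering to get (\ref{ene-eq}), bound the error term via (\ref{phi_bd}) by $\nu C_0 R^{-2}E_R$, apply Lemma~\ref{R_ave_lem} to pass to $P_0,E_0$, and then use $\sigma_0^2=E_0/P_0$ together with $R\ge c^{-1}\gamma^{-1}\sigma_0$ to absorb the error into the constants $c_{0,\gamma},c_{1,\gamma}$. The only point worth double-checking is the bookkeeping you flag yourself---the normalizing factors $1/(nR^2)$ in $\langle\cdot\rangle_R$---but once those are tracked your chain of inequalities reproduces (\ref{ener_casc}) exactly.
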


\begin{obs}{\em
The theorem provides a sufficient condition for the enstrophy cascade.
If (\ref{scales_con_fin}) is satisfied, then the averaged enstrophy
flux at scales $R$, throughout the inertial range defined by
(\ref{inert_range}), is oriented inwards (i.e. towards the lower scales) and is
comparable to the average enstrophy dissipation rate in $B(\bfo,R_0)$. Note that the
averages are taken over the finite-time intervals
 $[0,T]$ with $T\ge R_0^2/\nu$ (see (\ref{T_con})\.). This lower
 bound on the length of the time interval $T$ is consistent with the
 picture of decaying turbulence; namely, small $\nu$ corresponds to
 the well-developed turbulence which then persists for a longer time
 and it makes sense to average over longer time-intervals.
}\end{obs}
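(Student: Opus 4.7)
The plan is to exploit the local enstrophy equation (\ref{loc_enst_eq}) with the refined cut-off $\phi_i=\eta\,\psi_i$ attached to each ball $B(\bfx_i,R)$ in an optimal covering, and then to compare scales via $\sigma_0^2=E_0/P_0$. First I would write the averaged consequence of (\ref{loc_enst_eq}), namely the identity (\ref{ene-eq}),
\begin{equation*}
\Psi_R = \nu P_R - \frac{1}{n}\sum_{i=1}^n \frac{1}{T}\frac{1}{R^3}\iint \tfrac{1}{2}|\omega|^2(\partial_t\phi_i+\nu\Delta\phi_i)\,d\bfx\,dt,
\end{equation*}
which already isolates the dissipation term $\nu P_R$ and lumps everything else into a remainder driven by the derivatives of the cut-off.

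Next I would bound this remainder. The conditions (\ref{eta_def}) and (\ref{psi_def}) together with the time-scale restriction $T\ge R_0^2/\nu$ give the pointwise estimate (\ref{phi_bd}), which yields $|\partial_t\phi_i+\nu\Delta\phi_i|\le C\,\nu R^{-2}\,\phi_i^{2\delta-1}$ uniformly in $i$. Feeding this into the identity above and recognizing the localized enstrophy $E_{\bfx_i,R}(t)=\int\tfrac12|\omega|^2\phi_i^{2\delta-1}d\bfx$ on the right produces the two-sided bound
\begin{equation*}
\bigl|\Psi_R - \nu P_R\bigr| \le C_0\,\frac{\nu}{R^2}\,E_R.
\end{equation*}
Lemma \ref{R_ave_lem} then converts the local averages $P_R$ and $E_R$ into comparable multiples of the global references $P_0$ and $E_0$, at the cost of the absolute constants $K_1,K_2$.

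For the lower bound, the previous estimate rearranges to $\Psi_R \ge c_1\nu P_0\bigl(1-c_2\sigma_0^2/R^2\bigr)$ as in (\ref{low_bd}), using $\sigma_0^2=E_0/P_0$. The hypothesis (\ref{scales_con_fin}) together with the inertial-range restriction $R\ge \sigma_0/(c\gamma)$ then forces $c_2\sigma_0^2/R^2 \le \gamma^2$, so the parenthetical factor is at least $1-\gamma^2$ and one recovers $\Psi_R\ge c_{0,\gamma}\nu P_0$. The upper bound is symmetric: from $\Psi_R \le K_2\nu P_0 + C_0 K_2\,\nu R^{-2}E_0$ the same scale inequality bounds the second term by $(C_0\gamma^2/c_2)\nu P_0 = (\gamma^2/(K_1K_2))\nu P_0$, giving $\Psi_R\le c_{1,\gamma}\nu P_0$ with the constants in (\ref{c_con2}).

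The computation is essentially bookkeeping once the cut-off has been designed so that $|\partial_t\phi|$ and $|\Delta\phi|$ are controlled by $\phi^{2\delta-1}$ rather than by $\phi$ itself; this is exactly what lets the remainder be absorbed into $E_R$ with its natural localization weight $\phi^{2\delta-1}$, and is the only nontrivial step. Everything else is matching constants and invoking Lemma \ref{R_ave_lem}, so I do not expect any additional obstacle.
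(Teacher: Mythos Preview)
Your proposal is correct and follows essentially the same route as the paper: start from the averaged local enstrophy identity (\ref{ene-eq}), use the refined cut-off bounds (\ref{phi_bd}) (valid under $T\ge R_0^2/\nu$) to control the remainder by $C_0\nu R^{-2}E_R$, pass to the global quantities $E_0,P_0$ via Lemma~\ref{R_ave_lem}, and then invoke $\sigma_0^2=E_0/P_0$ together with (\ref{scales_con_fin}) and the inertial-range restriction to close both the lower and upper bounds with the constants (\ref{c_con2}). There is no substantive difference in strategy or in the order of estimates.
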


\begin{obs}\label{Rem_4.2}{\em
In the language of turbulence, the condition (\ref{scales_con_fin})
simply reads that the Kraichnan \emph{micro scale} computed over the
domain in view is smaller than the \emph{integral scale} (diameter
of the domain).

On the other hand, (\ref{scales_con_fin}) is equivalent to

\[
\frac{1}{T}\iint |\omega|^2\phi_0^{2\delta-1}\,d\bfx\,dt
<\frac{\gamma^2}{C_0K_1K_2}{R_0^2} \frac{1}{T}\iint
|\nabla\otimes\omega|^2\phi_0\,d\bfx\,dt\;
\]
which can be read as a requirement that the time average of a
Poincar\'e-like inequality on $B(\bfo,2R_0)$ is not saturating; this will hold for a
variety of flows in the regions of intense fluid activity (large gradients).
}\end{obs}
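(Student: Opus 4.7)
The plan is to verify the two assertions of the remark by directly unwinding the definitions; no dynamical information beyond what has already been set up in the proof of Theorem~\ref{balls_thm} is required.

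For the first assertion (Kraichnan microscale vs. integral scale), I would simply read off (\ref{sigma_def}): $\sigma_0 = (E_0/P_0)^{1/2}$ is by definition the Kraichnan dissipation length associated with the localization on $B(\bfo,2R_0)$, while $R_0$ is (up to the factor in (\ref{omega_ass})) the integral scale of the domain. Hence (\ref{scales_con_fin}) literally states that the former is dominated by the latter.

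For the equivalence to the integral inequality, the plan is to square (\ref{scales_con_fin}) and use (\ref{c_con1}) to rewrite it as
\[
\sigma_0^2 \;=\; \frac{E_0}{P_0} \;<\; \frac{\gamma^2}{C_0 K_1 K_2}\, R_0^2.
\]
Cross-multiplying by $P_0\ge 0$ gives $E_0 < \frac{\gamma^2}{C_0 K_1 K_2} R_0^2\, P_0$. Substituting the integral definitions (\ref{E_def}) and (\ref{P_def}), the common prefactor $\frac{1}{R_0^2 T}$ cancels from both sides, yielding the displayed inequality of the remark (with the harmless factor $\tfrac12$ absorbed into the normalization of the constant). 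Conversely, the same chain is reversible, so the two inequalities are equivalent.

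For the Poincar\'e-type reading, I would recall the standard weighted Poincar\'e inequality on the support of $\phi_0$, namely $B(\bfo,2R_0)$: it gives an estimate of the form $\int |\omega|^2 \phi_0^{2\delta-1} \,d\bfx \le C R_0^2 \int |\nabla\otimes\omega|^2 \phi_0\,d\bfx$ with a universal constant $C$ (after time-averaging, the same structure persists). The displayed inequality says the analogous bound holds with the strictly smaller constant $\gamma^2/(C_0 K_1 K_2)$ (for $\gamma<1$ in the admissible range), i.e.\ the Poincar\'e estimate is \emph{not} saturated in the time-averaged sense; heuristically this is the regime where $|\nabla\otimes\omega|^2$ is large relative to $|\omega|^2$, i.e.\ regions of intense fluid activity. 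There is no real obstacle here beyond keeping the normalization factors in (\ref{sigma_def})--(\ref{P_def}) consistent; the content is purely algebraic.
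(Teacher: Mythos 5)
Your proposal is correct and follows exactly the paper's (implicit) reasoning: the remark is purely a matter of unwinding the definitions (\ref{sigma_def}), (\ref{E_def}), (\ref{P_def}) and squaring (\ref{scales_con_fin}), which is all you do. You also correctly flag the only genuine wrinkle, namely the factor $\tfrac12$ in the definition of $E_0$ (and the stray $R_0^3$ in (\ref{P_def}), evidently a typo for $R_0^2$), so that the displayed inequality holds up to an immaterial renormalization of the constant.
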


\begin{obs}\label{E'_rem1}{\em
If we do not impose the additional assumptions (\ref{psi_def_add1})
and (\ref{psi_def_add2}) for the test functions on the balls
$B(\bfx_i,R)\not\subset B(\bfo,R_0)$, then the lower bounds for
$\Psi_R$ in (\ref{low_bd}) and (\ref{ener_casc}) will hold with $P$
replaced by the time-space average of  the {\em{non-localized}}
in space palinstrophy on $B(\bfo,R_0)$,
\[
P'=\frac{1}{T}\int\limits_0^{2T}
\frac{1}{R_0^2}\int\limits_{B(\bfx_\bfo,R_0)}|\nabla\otimes\omega|^2\eta\,d\bfx\,dt\;.
\]
This is the case because the estimate $P_R\ge P/K_1$ gets
replaced with
\[P_R\ge\frac{1}{K_1}P'\;.\]
}\end{obs}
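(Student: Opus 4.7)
The only step in the derivation of Theorem~\ref{balls_thm} that is sensitive to the cone-extension conditions (\ref{psi_def_add1})--(\ref{psi_def_add2}) is the lower bound $P_R\ge P_0/K_1$ from Lemma~\ref{R_ave_lem}; everything else---in particular the upper bound $E_R\le K_2 E_0$ used in (\ref{low_bd_rel}) and the algebra that leads from there to (\ref{ener_casc})---depends only on the basic cut-off estimates (\ref{eta_def})--(\ref{psi_def}) and on the pointwise inequality $\psi_i\le\psi_0$, both of which survive when (\ref{psi_def_add1})--(\ref{psi_def_add2}) are discarded. My plan is therefore to reprove the analogue of the lower bound on $P_R$, identify the integral that naturally appears on the right-hand side, and then read off that this is precisely the quantity $P'$.

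For the modified lower bound, I would start from the observation that, under the reduced hypotheses, each cut-off $\psi_i$ still equals $1$ on $B(\bfx_i,R)\cap B(\bfo,R_0)$, and the family $\{B(\bfx_i,R)\}$ still covers $B(\bfo,R_0)$. Consequently $\sum_i \psi_i \ge \chi_{B(\bfo,R_0)}$ pointwise, so that $\sum_i \phi_i \ge \eta\,\chi_{B(\bfo,R_0)}$ on $(0,2T)\times\mR$. On the annulus $B(\bfo,2R_0)\setminus B(\bfo,R_0)$ one can no longer claim that $\sum_i \psi_i$ dominates $\psi_0$, but this region can simply be discarded because the integrand $|\nabla\otimes\omega|^2$ is nonnegative. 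Inserting this into the definition (\ref{P_R_def}) of $P_R$ and using the upper bound $n\le K_1(R_0/R)^2$ from (\ref{n_con1}), I obtain
\[
P_R \;\ge\; \frac{1}{nTR^2}\sum_{i=1}^{n}\iint|\nabla\otimes\omega|^2\phi_i\,d\bfx\,dt \;\ge\; \frac{R_0^2}{nR^2}\,P' \;\ge\; \frac{1}{K_1}\,P',
\]
which is the promised substitute for $P_R\ge P_0/K_1$.

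With this single inequality in hand, I would then retrace the derivation of (\ref{low_bd}) and (\ref{ener_casc}) line by line: the relation (\ref{low_bd_rel}) becomes $\Psi_R\ge \nu P'/K_1 - \nu(C_0K_2/R^2)E_0$, and from here the scale condition (\ref{scales_con_fin}) together with the definition of $\sigma_0$ produces lower bounds of exactly the same form as in the stated displays, but with $P_0$ replaced by $P'$. The upper bounds in (\ref{ener_casc}) are untouched, since they only use $P_R\le K_2 P_0$, which continues to hold via $\psi_i\le\psi_0$. No step in this plan is expected to be an obstacle; the one thing to verify carefully is that the cut-off dominance $\sum_i\psi_i\ge\chi_{B(\bfo,R_0)}$ truly requires nothing more than the covering property---which it does, because each $\psi_i$ is identically $1$ on $B(\bfx_i,R)\cap B(\bfo,R_0)$ regardless of how it is extended outside $B(\bfo,R_0)$.
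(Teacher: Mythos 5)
Your proposal is correct and follows the same route as the paper: the only affected step is the lower bound on $P_R$, and replacing the domination $\sum_i\phi_i\ge\phi_0$ (which needs the cone extensions) by $\sum_i\phi_i\ge\eta\,\chi_{B(\bfo,R_0)}$ (which needs only the covering property and $\psi_i=1$ on $B(\bfx_i,R)\cap B(\bfo,R_0)$) yields exactly $P_R\ge P'/K_1$, after which the derivation of the lower bounds in (\ref{low_bd}) and (\ref{ener_casc}) goes through verbatim with $P_0$ replaced by $P'$. The paper asserts precisely this substitution without spelling out the details; your computation $P_R\ge \frac{R_0^2}{nR^2}P'\ge \frac{1}{K_1}P'$ is the intended justification.
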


\begin{obs}\label{unif_ave-obs1}{\em
If we integrate the relation (\ref{loc_enst_eq}) over $B({\bf 0}, R_0)$ (instead of
summing over the optimal covering) and use Lemma \ref{U_ave_lem}, the $\Psi_R$ in
(\ref{ener_casc})
can be replaced with the uniform averaged enstrophy flux at scales $R$,
\[\Psi^u_{R}=\frac{1}{R_0^2}\int\limits_{B({\bf 0}, R_0)} \Psi_{\bfx,R}\; d\bfx\;,\]
with $K_1=2^2$ and $K_2=4^2$.}
\end{obs}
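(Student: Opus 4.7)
The plan is to repeat the derivation of Theorem~\ref{balls_thm} step by step, only replacing the covering average $\lgl\cdot\rgl_R=\frac{1}{n}\sum_i$ by the uniform average $\frac{1}{R_0^2}\int_{B(\bfo,R_0)}d\bfy$ throughout. The only ingredient of that derivation which is sensitive to the choice of averaging is the passage from the $R$-scale quantities $E_R,P_R$ to their $R_0$-scale counterparts via Lemma~\ref{R_ave_lem}; all the remaining ingredients (the local enstrophy equation (\ref{loc_enst_eq}), the pointwise bound (\ref{phi_bd}) on $\phi=\eta\psi_{\bfy,R}$, and the algebra exploiting $\sigma_0$) are pointwise in the center parameter $\bfy$ and therefore survive either form of averaging.

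First I would apply $\frac{1}{R_0^2}\int_{B(\bfo,R_0)}d\bfy$ to the local enstrophy equation (\ref{loc_enst_eq}) tested against $\phi=\eta(t)\psi_{\bfy,R}(\bfx)$. This yields the uniform analogue of (\ref{ene-eq}),
\[
\Psi^u_R \;=\; \nu P^u_R \;-\; \frac{1}{R_0^2}\int\limits_{B(\bfo,R_0)}\frac{1}{T}\frac{1}{R^2}\iint \frac{1}{2}|\omega|^2\bigl(\partial_t\phi_{\bfy,R}+\nu\Delta\phi_{\bfy,R}\bigr)\,d\bfx\,dt\,d\bfy.
\]
Since (\ref{phi_bd}) is a pointwise bound valid for every choice of center $\bfy$, the remainder term is controlled in absolute value by $\nu\,C_0 R^{-2}\,E^u_R$, exactly as in the covering case.

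Next I would invoke Lemma~\ref{U_ave_lem} in place of Lemma~\ref{R_ave_lem}, which plays the same role with the concrete constants $K_1=2^2=4$ and $K_2=4^2=16$. The lower bound $P^u_R\ge \tfrac{1}{4}P_0$ together with the upper bound $E^u_R\le 16\,E_0$ produces the uniform analogue of (\ref{low_bd_rel}),
\[
\Psi^u_R \;\ge\; \tfrac{1}{4}\nu\,P_0 \;-\; \tfrac{16\,C_0}{R^2}\nu\,E_0,
\]
and, symmetrically, $\Psi^u_R \le 16\,\nu\,P_0+\tfrac{16\,C_0}{R^2}\nu\,E_0$. Feeding in the hypothesis (\ref{scales_con_fin}) through $E_0=\sigma_0^2 P_0$ and restricting $R$ to the inertial range (\ref{inert_range}) then reproduces the cascade bound (\ref{ener_casc}) verbatim, with $K_1$ and $K_2$ in the constants (\ref{c_con2}) and in $c$ from (\ref{c_con1}) replaced by $4$ and $16$, respectively.

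No genuinely new obstacle arises; the whole point of establishing Lemma~\ref{U_ave_lem} ahead of time is that it packages, with explicit numerical constants, precisely the two-sided comparability between $(e^u_R,E^u_R,P^u_R)$ and $(e_0,E_0,P_0)$ that Lemma~\ref{R_ave_lem} supplied in the covering argument. The only mildly delicate point to verify is that the pointwise bound (\ref{phi_bd}) is genuinely uniform in $\bfy\in B(\bfo,R_0)$, including for balls $B(\bfy,R)\not\subset B(\bfo,R_0)$ where the boundary conditions (\ref{psi_def_add1})--(\ref{psi_def_add2}) are used; this however is already built into the construction of $\psi_{\bfy,R}$ in Section~2 and requires no further work.
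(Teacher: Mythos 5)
Your proposal is correct and follows exactly the route the paper intends: integrate the local enstrophy balance (\ref{loc_enst_eq}) over the center variable in $B(\bfo,R_0)$, note that the cut-off bounds (\ref{phi_bd}) are uniform in the center, and substitute Lemma \ref{U_ave_lem} for Lemma \ref{R_ave_lem} so that the roles of $K_1$ and $K_2$ are played by $2^2$ and $4^2$ in (\ref{c_con1}) and (\ref{c_con2}). Nothing further is needed.
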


\begin{obs}\label{ene_casc_rem}{\em
Proceeding similarly as above, but using the energy balance equation
(\ref{loc_ene_ineq}) we can derive a sufficient condition for the
\emph{forward} energy cascade; if for some $0<\gamma<1$ we have
\begin{equation}\label{ene_scales_con_fin}
\tau_0< c{\gamma}\,R_0\;,
\end{equation}
then for all $R$,
\begin{equation}\label{ene_inert_range}
\frac{1}{c\gamma}\,\tau_0\le R\le R_0,
\end{equation}
the averaged energy flux $\Phi_R$ satisfies
\begin{equation}\label{ene_casc}
c_{0,\gamma}\nu E'_0\le\Phi_R\le c_{1,\gamma} \nu E'_0\;,
\end{equation}
where the constants are the same as in Theorem \ref{balls_thm}.

Note that for a
${\bf v}$ in $H^2_0(B(\bfo,R_0))^2$
\[|\nabla\otimes{\bf v}|^2=\int |\nabla\otimes {\bf v}|^2\,d\bfx=
-\int {\bf v}\cdot \Delta {\bf v}\,d\bfx\le |{\bf v}|\,|\Delta{\bf v}|\;,\]
and thus
\[
\frac{|{\bf v}|}{|\nabla\otimes{\bf v}|}\ge\frac{|\nabla\otimes{\bf v}|}{|\Delta{\bfv}|}\;.
\]
If we extend the analogy with Poincar\'e inequalities used in Remark \ref{Rem_4.2}
to this case, then the last relation suggests that the Taylor's
length scale $\tau_0$ should dominate the Kraichnan's scale $\sigma_0$
for a variety of flows characterized by large gradients, and so the sufficient
condition for forward energy cascade,  (\ref{ene_scales_con_fin}), is potentially
more restrictive then (\ref{scales_con_fin}), which is consistent with
the arguments that in 2D flows the inertial range for (forward) energy cascade,
if exists, should be much narrower then the enstrophy inertial range.
This fact was in fact established in the Fourier settings in {\cite{D1}}.
}
\end{obs}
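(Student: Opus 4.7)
The plan is to parallel the proof of Theorem \ref{balls_thm} line-by-line, swapping the local enstrophy equation (\ref{loc_enst_eq}) for the local energy equation (\ref{loc_ene_ineq}), the palinstrophy $P_0$ for the enstrophy $E'_0$, and the enstrophy $E_0$ for the energy $e_0$. Concretely, I apply (\ref{loc_ene_ineq}) with $\phi = \phi_i = \eta \psi_i$ on each ball $B(\bfx_i,R)$ of an optimal covering of $B(\bfo,R_0)$, divide by $TR^2$, and average over $i$. This produces the analog of (\ref{ene-eq}),
\[
\Phi_R \;=\; \nu\,\widetilde{E}_R \;-\; \frac{1}{n}\sum_{i=1}^n \frac{1}{TR^2}\iint \frac{|\bfu|^2}{2}\bigl(\partial_t \phi_i + \nu \Delta \phi_i\bigr)\, d\bfx\, dt,
\]
where $\widetilde{E}_R$ is the ensemble average of $\int |\nabla\otimes\bfu|^2 \phi_i\, d\bfx$. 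Since $\bfu$ is divergence-free in 2D and each $\phi_i$ has compact support inside $\Omega$, integration by parts identifies $\int |\nabla\otimes\bfu|^2\phi_i\, d\bfx$ with $\int |\omega|^2\phi_i\, d\bfx = 2\,E'_{\bfx_i,R}(t)$ up to commutator terms of order $R^{-2}\int |\bfu|^2 \phi_i^{2\delta-1}\, d\bfx$, which are of the same form as the error already present from $\partial_t\phi_i + \nu\Delta\phi_i$.

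Using (\ref{phi_bd}), which holds once $T\ge R_0^2/\nu$, I get $|(\phi_i)_t| + \nu|\Delta \phi_i| \le \nu C_0 R^{-2}\phi_i^{2\delta-1}$. Combining with Lemma \ref{R_ave_lem} yields
\[
\Phi_R \;\ge\; \nu E'_R - \nu\frac{C_0}{R^2} e_R \;\ge\; \frac{\nu}{K_1} E'_0 - \nu \frac{C_0 K_2}{R^2}\, e_0 \;=\; c_1 \nu E'_0 \left(1 - c_2 \frac{\tau_0^2}{R^2}\right),
\]
with $c_1=1/K_1$, $c_2 = C_0 K_1 K_2$, and $\tau_0^2 = e_0/E'_0$ from (\ref{tau_def}). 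When (\ref{ene_scales_con_fin}) holds with $c=1/\sqrt{C_0 K_1 K_2}$ and $R\ge \tau_0/(c\gamma)$, the quantity $c_2\tau_0^2/R^2$ is bounded by $\gamma^2$, producing the lower bound $\Phi_R \ge c_{0,\gamma}\nu E'_0$ with $c_{0,\gamma}=(1-\gamma^2)/K_1$. The upper bound is symmetric: discarding a sign and using $P_R\le K_2 P_0$-style estimates gives $\Phi_R \le \nu K_2 E'_0 + \nu C_0 K_2\tau_0^2 R^{-2} E'_0 \le c_{1,\gamma}\nu E'_0$ with $c_{1,\gamma}=K_2\bigl(1+\gamma^2/(K_1 K_2)\bigr)$, matching exactly the constants of Theorem \ref{balls_thm}.

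The principal obstacle is the integration-by-parts step identifying $\int |\nabla\otimes\bfu|^2\phi_i$ with $2\,E'_{\bfx_i,R}$: this did not arise in the enstrophy cascade argument because $P$ was defined with $|\nabla\otimes\omega|^2\phi$ directly. Here one exploits the 2D identity $\int |\nabla\bfu|^2 = \int |\omega|^2$ for divergence-free fields, localized against $\phi_i$, and must ensure that the commutator remainder is $O(R^{-2} e_R)$ and therefore absorbable into the cutoff error already on hand. Once this matching is verified, everything else is a direct transliteration of the proof of Theorem \ref{balls_thm} with $(P_0, E_0, \sigma_0) \mapsto (E'_0, e_0, \tau_0)$.
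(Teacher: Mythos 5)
Your overall strategy --- transliterating the proof of Theorem \ref{balls_thm} with (\ref{loc_enst_eq}) replaced by (\ref{loc_ene_ineq}) and $(P_0,E_0,\sigma_0)$ replaced by $(E'_0,e_0,\tau_0)$ --- is exactly what the paper means by ``proceeding similarly as above,'' and the skeleton (the analogue of (\ref{ene-eq}), the cut-off bounds (\ref{phi_bd}), Lemma \ref{R_ave_lem}, and the algebra producing $c_{0,\gamma}$ and $c_{1,\gamma}$) is right. The trouble is precisely the step you single out as the principal obstacle. The dissipation term handed to you by the local energy equality is $\nu\iint|\nabla\otimes\bfu|^2\phi_i$, and you propose to convert it to $2E'_{\bfx_i,R}$ by integration by parts. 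Two things go wrong there. First, the commutator is of the form $\iint \bfu\cdot(\nabla\otimes\bfu)\,\nabla\phi_i$, which is \emph{not} of order $R^{-2}\iint|\bfu|^2\phi_i^{2\delta-1}$ as you claim; to reduce it to that form you must apply Young's inequality, which reinserts a term $\epsilon\iint|\nabla\otimes\bfu|^2\phi_i$ into the identity and hence perturbs the constants. Second, even granting the identification, you obtain $2E'_{\bfx_i,R}$ (with $E'$ as in (\ref{enstdef})), not $E'_{\bfx_i,R}$; your displayed chain silently drops this factor, which is harmless for the lower bound but fatal for the upper one --- $\Phi_R\le c_{1,\gamma}\nu E'_0$ with $c_{1,\gamma}=K_2\bigl(1+\gamma^2/(K_1K_2)\bigr)$ does not follow from your own identification.

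The cleaner route, and the one that reproduces the constants of Theorem \ref{balls_thm} verbatim, is not to integrate by parts at all: keep $\frac{1}{T}\frac{1}{R_0^2}\iint|\nabla\otimes\bfu|^2\phi_0\,d\bfx\,dt$ as the localized enstrophy (dissipation) functional, observe that it is a nonnegative quantity localized by $\phi$ exactly as $P$ is, so that Lemma \ref{R_ave_lem} applies to it unchanged, and define the Taylor scale as the square root of the ratio of $e_0$ to this quantity. Then every line of the proof of Theorem \ref{balls_thm} carries over with no new error terms. (To be fair, the remark's literal use of $E'_0$ from (\ref{enstdef}), which carries the weight $\frac12|\omega|^2$, glosses over the same factor of two and the same localization commutator; the statement is meant to be read with the $|\nabla\otimes\bfu|^2$ normalization.) In short: right approach, but the one genuinely new step you introduced is not closed as written, and it is avoidable.
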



\section{Existence of inverse energy cascades}

Assume $\bfu$ is a solution of the NSE (\ref{inc-nse}) which satisfies no-slip boundary conditions
in some bounded region $\Omega\subset\mR$:
\begin{equation}\label{no_slip}
\left.\bfu(t,\bfx)\right|_{\partial\Omega}=0 \qquad\mbox{for all}\ t\ge0\;.
\end{equation}
For simplicity, we consider $\Omega=B(\bfo,D)$ (although more general domains would be acceptable).

Define
\begin{equation}
e=\frac{1}{T}\iint\limits_{[0,2T]\times\Omega} \frac{|\bfu|^2}{2}\eta\,d\bfx dt\;
\label{back_e_def}
\end{equation}
and
\begin{equation}\label{back_E_def}
E=\frac{1}{T}\iint\limits_{[0,2T]\times\Omega}  |\nabla\bfu|^2\eta\,d\bfx dt\;,
\end{equation}
the time-averaged energy and enstrophy in $\Omega$ (localized in time), and
\begin{equation}
\tau=\frac{e}{E}
\end{equation}
the Taylor's length-scale for $\Omega$ (here $\eta$ is a function of
time satisfying (\ref{eta_def})\,).

We assume that there exists $\gamma>0$ and a length-scale $0<R_0<D/2$ such that

\begin{equation}\label{outer_glob_assu}
e\le\gamma^2 R_0^2E \quad \mbox{or equivalently,}\quad \tau\le \gamma R_0\;.
\end{equation}

In order to define localized fluxes toward larger scales we introduce the following cut-off functions.

Let $1/2\le\delta<1$.  Define
\begin{equation}\label{D_def}
D(\bfx,R)=\Omega\setminus B(\bfx,R)\;.
\end{equation}

For an $\bfx_0$ in $\Omega$ and $R_0<R\le D/2$ define the refined cut-off functions
$\bphi=\bphi_{\bfx_0,T,R}(t,\bfx)=\eta(t)\bpsi(\bfx)$, where $\eta=\eta_T(t)$ is defined in (\ref{eta_def})
and $\bpsi=\bpsi_{\bfx_0,R}(\bfx)$ is a $C^{\infty}$ function on $\Omega$ which satisfies
\begin{equation}\label{tpsi_def}\begin{aligned}
\quad 0\le\bpsi\le1,&\quad\bpsi=1\ \mbox{on}\
D(\bfx_0,R),\quad \bpsi=0\ \mbox{on}\ B(\bfx_0,R-R_0),\\
&\mbox{with}\quad\frac{|\nabla\bpsi|}{\bpsi^{\delta}}\le\frac{C_0}{R_0}\quad
\mbox{and}\quad\frac{|\Delta\bpsi|}{\bpsi^{2\delta-1}}\le\frac{C_0}{R_0^2}\;.
\end{aligned}
\end{equation}

Figure \ref{outer_ball_fig} illustrates the definition of $\bpsi$ in the case $B(\bfx_0,R)$ is entirely contained in
$\Omega$.
\begin{figure}
     \psfrag{R}{\tiny$R$}
     \psfrag{R_0}{\tiny$R_0$}
     \psfrag{0}{\tiny$\bf{0}$}
     \psfrag{D}{\tiny$D$}
     \psfrag{x0}{\tiny$\bfx_0$}
     \psfrag{psi=0}{\tiny $\bpsi=0$}
     \psfrag{psi=1}{\tiny$\bpsi=1$}
  \centerline{\includegraphics[scale=0.8]{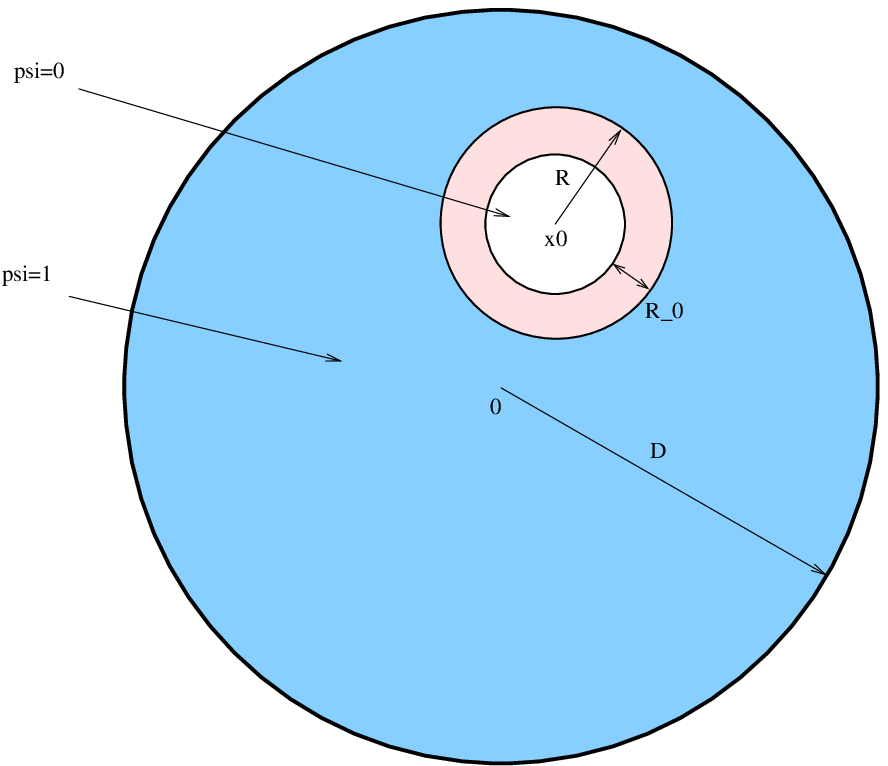}}
    \caption{Regions of support $(\bpsi_{\bfx_0,R})$.}
    \label{outer_ball_fig}
\end{figure}

Define the localized energy and enstrophy associated to the outer region $D(\bfx_0, R)$
\begin{equation}
\be_{\bfx_0,R}=\frac{1}{T}\iint \frac{|\bfu|^2}{2}\bphi_{\bfx_0,R}^{2\delta-1}\,d\bfx dt\:
\end{equation}
and
\begin{equation}
\bE_{\bfx_0,R}=\frac{1}{T}\iint |\nabla\bfu|^2\bphi_{\bfx_0,R}\,d\bfx dt\;,
\end{equation}
as well as the total energy flux
\begin{equation}
\bPhi_{\bfx_0,R}=\frac{1}{T}\iint\left(\frac{|\bfu|^2}{2}+p\right)\bfu\cdot\nabla\bphi_{\bfx_0,R}\,d\bfx dt.
\end{equation}
Note that since $\bpsi$ can be constructed such
that $\nabla\bphi=\eta\nabla \bpsi$ is oriented along the radial
directions  outside the ball $B(\bfx_0,R)$,
$\bPhi_{\bfx_0,R}$ can be viewed as the flux {\em out of} $B(\bfx_0,R)$ (i.e. {\em into} $D(\bfx_0,R)$)
through the layer between the spheres $S(\bfx_0,R)$ and
$S(\bfx_0,R-R_0)$. Additionally, (\ref{loc_ene_ineq}) confirms that $\be_{\bfx_0,R}$ tends to increase
on average in the case $\bPhi_{\bfx_0,R}>0$.

To show existence of inverse energy cascade we proceed similarly to section \ref{balls}.

Note that (\ref{no_slip}) implies that the relation (\ref{loc_ene_ineq})  holds for $\phi=\bphi$, and so, rewriting it
in terms of the quantities defined above yields
\begin{equation}\label{outer_loc_ene_bal}
\bPhi_{\bfx_0,R}=\nu\bE_{\bfx_0,R}-\frac{1}{T}\iint \frac{|\bfu|^2}{2}\left(\partial_t\bphi_{\bfx_0,R}+\nu\Delta\bphi_{\bfx_0,R}\right)\,d\bfx dt\;.
\end{equation}

Using estimates analogous to (\ref{phi_bd}) we arrive at
\begin{equation}\label{out_ene_est}
\left|\frac{1}{T}\iint \frac{|\bfu|^2}{2}\left(\partial_t\bphi_{\bfx_0,R}+\nu\Delta\bphi_{\bfx_0,R}\right)\,d\bfx dt\right|\le\frac{C_0}{R_0^2}\be_{\bfx_0,R}\;,
\end{equation}
provided
\begin{equation}\label{T_con2}
T\ge\frac{R_0^2}{\nu}\;.
\end{equation}

If $0<R_0<R<D/2$, we only need two regions $D(\bfx_1,R)$ and $D(\bfx_2,R)$ to cover $\Omega$ (by choosing $\bfx_1,\bfx_2\in\Omega$ with
$|\bfx_1-\bfx_2|>2R$). These regions provide optimal covering of $\Omega$ in the spirit of Definition \ref{opt_cover_def} which will be used in this section.

For these optimal coverings we have
\begin{equation}
\frac{1}{2}e\le \be_R=\frac{1}{2}\left(\be_{\bfx_1,R}+\be_{\bfx_2,R}\right)\le e\;
\end{equation}
and
\begin{equation}
\frac{1}{2}E\le \bE_R=\frac{1}{2}\left(\bE_{\bfx_1,R}+\bE_{\bfx_2,R}\right)\le E\;.
\end{equation}

Thus, if we sum up (\ref{outer_loc_ene_bal}) over $\bfx_1$ and $\bfx_2$ and use (\ref{out_ene_est}), we obtain the following bounds on the ensemble average of time-averaged local fluxes at scales $R$
\begin{equation}
\bPhi_R=\frac{1}{2}\left(\bPhi_{\bfx_1,R}+\bPhi_{\bfx_2,R}\right)\le \bE_R+\frac{C_0}{R_0^2}\be_R\le \nu E +\frac{C_0}{R_0^2}e\;
\end{equation}
and
\begin{equation}
\bPhi_R\ge \bE_R-\frac{C_0}{R_0^2}\be_R\ge \frac{1}{2}\nu E -\frac{C_0}{R_0^2}e\;.
\end{equation}

Consequently,

\begin{equation}\label{outer_Phi_bds}
\frac{\nu}{2} E \left(1-2C_0\frac{\tau^2}{R_0}\right)\le\bPhi_R\le \nu E \left(1+C_0\frac{\tau^2}{R_0}\right)\;.
\end{equation}

Going back to the (\ref{outer_glob_assu}) we obtain the following.

\begin{thm}\label{back_casc_thm}
Assume
\begin{equation}\label{back_ene_casc_con}
\tau\le\gamma R_0\;
\end{equation}
for some $0<R_0<D/2$ and $0<\gamma<1/\sqrt{2C_0}$.
Then, for all $R$ satisfying
\begin{equation}\label{back_iner_range}
R_0<R<\frac{D}{2}
\end{equation}
we have
\begin{equation}\label{back_ene_casc}
\bar{c}_{0,\gamma}\nu E \le\lgl\bPhi\rgl_R\le\bar{c}_{0,\gamma} \nu E\;
\end{equation}
where
\begin{equation}\label{back_casc_constants}
\bar{c}_{0,\gamma}=\frac{1}{2}(1-2C_0\gamma^2)\quad\mbox{and}\quad\bar{c}_{0,\gamma}=1+C_0\gamma^2\;,
\end{equation}
while the averages are taken with respect to optimal coverings and over time intervals $T\ge R_0^2/\nu$.

\end{thm}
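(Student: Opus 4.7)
The plan is to follow the template of Theorem~\ref{balls_thm}, with the enstrophy equation replaced by the local energy equation (\ref{loc_ene_ineq}) and the ball-supported cut-off replaced by the ``exterior'' cut-off $\bphi_{\bfx_0,R}=\eta(t)\bpsi(\bfx)$ of (\ref{tpsi_def}). First I would insert $\phi=\bphi_{\bfx_0,R}$ into (\ref{loc_ene_ineq}) and divide by $T$ to obtain the local identity (\ref{outer_loc_ene_bal}). The derivative bounds in (\ref{eta_def}) and (\ref{tpsi_def}), combined with the time-scale requirement $T\ge R_0^2/\nu$ from (\ref{T_con2}), yield
\begin{equation*}
|\partial_t\bphi|+\nu|\Delta\bphi|\le \frac{C_0\nu}{R_0^2}\,\bphi^{2\delta-1},
\end{equation*}
the exact analogue of (\ref{phi_bd}); multiplying by $|\bfu|^2/2$ and integrating produces the remainder estimate (\ref{out_ene_est}) in terms of $\be_{\bfx_0,R}$.

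Next I would build a two-element ``exterior'' optimal covering by choosing $\bfx_1,\bfx_2\in\Omega$ with $|\bfx_1-\bfx_2|>2R$, which is available because $R<D/2$. Since $\bpsi_{\bfx_i,R}\equiv 1$ on $\Omega\setminus B(\bfx_i,R)$ and the two balls $B(\bfx_i,R)$ are disjoint, the sum $\bpsi_{\bfx_1,R}+\bpsi_{\bfx_2,R}$ is pointwise at least $1$ on $\Omega$ while each summand is at most $1$; averaging then delivers the two-sided comparabilities $\tfrac{1}{2}e\le\be_R\le e$ and $\tfrac{1}{2}E\le\bE_R\le E$, which play the role of Lemma~\ref{R_ave_lem} in this exterior setting. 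Summing (\ref{outer_loc_ene_bal}) over the two centers, halving, and feeding in both the remainder bound and these comparabilities yields
\begin{equation*}
\tfrac{1}{2}\nu E-\tfrac{C_0}{R_0^2}\,e \;\le\; \bPhi_R \;\le\; \nu E+\tfrac{C_0}{R_0^2}\,e,
\end{equation*}
which is (\ref{outer_Phi_bds}) once rewritten via $e=\tau^2 E$. Finally, the hypothesis (\ref{back_ene_casc_con}) substitutes $\tau^2/R_0^2\le\gamma^2$ to yield (\ref{back_ene_casc}), and the side condition $\gamma<1/\sqrt{2C_0}$ is precisely what keeps the lower constant $\bar c_{0,\gamma}$ strictly positive.

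The main technical point I expect to require care is the applicability of (\ref{loc_ene_ineq}) to the cut-off $\bphi_{\bfx_0,R}$, which is not compactly supported in $\Omega$ since $\bpsi\equiv 1$ in a neighborhood of $\partial\Omega$. The identity is saved by the no-slip condition (\ref{no_slip}): approximating $\bphi$ by a sequence of honest test functions in $\mathcal{D}((0,2T)\times\Omega)$ and passing to the limit, every boundary contribution arising from the integrations by parts of $-\nu\Delta\bfu\cdot\bphi\,\bfu$, $(\bfu\cdot\nabla)\bfu\cdot\bphi\,\bfu$, and $\nabla p\cdot\bphi\,\bfu$ vanishes because $\bfu|_{\partial\Omega}=0$. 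Once this passage is in place the remainder of the argument is purely computational and parallels Section~\ref{balls}.
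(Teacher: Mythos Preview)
Your proposal is correct and follows essentially the same route as the paper: insert $\bphi_{\bfx_0,R}$ into the local energy identity to obtain (\ref{outer_loc_ene_bal}), bound the remainder via (\ref{out_ene_est}) using the derivative estimates on $\bphi$ and the condition $T\ge R_0^2/\nu$, build the two-point exterior covering with $|\bfx_1-\bfx_2|>2R$, use the pointwise comparabilities to get $\tfrac12 e\le\be_R\le e$ and $\tfrac12 E\le\bE_R\le E$, and then substitute $\tau\le\gamma R_0$. Your explicit justification of why the no-slip condition permits the non-compactly-supported test function $\bphi$ is in fact more detailed than the paper's one-line remark to the same effect.
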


\begin{obs}
{\em The meaning of the theorem above is that if the condition (\ref{back_ene_casc_con}) is satisfied, then for a range of scales $R$, the average backward energy flux is comparable to the total energy dissipation rate $\nu E$. Thus we have a backward energy cascade over the inertial range defined by (\ref{back_iner_range}). The sufficient condition  (\ref{back_ene_casc_con}) does not call for $\tau$ to be much smaller then the internal integral scale $R_0$. However, the inertial range for backwards energy cascade is wide provided $R_0\ll D/2$, which means that backwards energy cascade will exist for a wide range of scales provided $\tau\ll D$ (according to (\ref{back_iner_range}) it will start at scales comparable with $\tau$ and end at scales comparable with $D$). In particular, the scales $D$ and $R_0$ do not have to coincide.
}
\end{obs}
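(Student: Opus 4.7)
The final item is a \emph{Remark} interpreting Theorem \ref{back_casc_thm}, so rather than demanding a genuinely new argument, its justification amounts to unpacking the quantitative content of that theorem and verifying that the ordering of the length scales $\tau$, $R_0$, and $D$ is consistent with the claims. My plan is to check each of its four assertions in turn by a direct reading of the theorem, and then to observe that the only subtlety concerns the admissible choice of $R_0$.

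For the first assertion, that (\ref{back_ene_casc_con}) yields a backward energy cascade over (\ref{back_iner_range}), I would simply invoke the conclusion (\ref{back_ene_casc}) of Theorem \ref{back_casc_thm}: the flux $\bPhi_R$, identified in the paragraphs preceding the theorem as the flux out of $B(\bfx_0,R)$ (equivalently, into $D(\bfx_0,R)$), is pinched between two positive multiples of the total dissipation rate $\nu E$ on the whole interval $R_0<R<D/2$. The lower constant $\bar{c}_{0,\gamma}=(1-2C_0\gamma^2)/2$ is strictly positive by the hypothesis $\gamma<1/\sqrt{2C_0}$, so the flux is uniformly of one sign and comparable in magnitude to $\nu E$, which is the working meaning of a cascade. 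For the second assertion, I would note that $\gamma$ may be any constant strictly less than $1/\sqrt{2C_0}$, so (\ref{back_ene_casc_con}) is an order-one relation $\tau\le\gamma R_0$ rather than an asymptotic smallness, hence does not demand $\tau\ll R_0$.

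For the third assertion, the inertial range (\ref{back_iner_range}) has multiplicative width $D/(2R_0)$, which is large precisely when $R_0\ll D/2$. To combine this with the hypothesis into the final assertion that $\tau\ll D$ alone already suffices, I would exploit the freedom in choosing $R_0$: set $R_0=\tau/\gamma$, the smallest value compatible with (\ref{back_ene_casc_con}). Then $R_0$ is comparable to $\tau$, the inertial range runs from $R_0\sim\tau$ up to $D/2\sim D$, and its multiplicative width is of order $\gamma D/(2\tau)\gg 1$. The main (indeed, only) obstacle to verify is that this choice of $R_0$ is admissible, namely that $R_0=\tau/\gamma$ satisfies both $R_0<D/2$ (so that (\ref{back_iner_range}) is non-empty) and $R_0\ll D/2$ (so that the range is wide); both reduce to $\tau\ll\gamma D/2$, i.e.\ $\tau\ll D$ up to the fixed constant $\gamma$. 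Thus the content of the remark is precisely the observation that the geometric constraints on $R_0$ decouple: its lower bound is dictated by $\tau$ through (\ref{back_ene_casc_con}) and its upper bound by $D$, with no requirement that these scales be close.
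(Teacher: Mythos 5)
Your reading is correct and is essentially the paper's own (implicit) justification: the remark is interpretive, and its content follows from Theorem \ref{back_casc_thm} exactly as you unpack it, with the choice $R_0\sim\tau/\gamma$ showing that $\tau\ll D$ alone yields a wide inertial range running from scales comparable to $\tau$ up to scales comparable to $D$. No gaps; the observation that the lower bound on $R_0$ is set by $\tau$ via (\ref{back_ene_casc_con}) while the upper bound is set by $D$ is precisely the point of the remark.
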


\begin{obs}{\em
By combining Theorem \ref{back_casc_thm} with Remark \ref{ene_casc_rem} we note that if on some ball $B(\bfx_0,R_1)\subset\Omega$ the local Taylor scale satisfies $\tau_0\ll R_1$, while the global Taylor scale $\tau\ll D$, we have both inverse energy cascade on $\Omega$ over the range of scale satisfying
(\ref{back_iner_range}) as well as the direct energy cascade inside $B(\bfx_0,R_1)$ over the range of scale defined by (\ref{ene_inert_range}) (with $R_0$ replaced by $R_1$).
}\end{obs}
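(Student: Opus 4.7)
The plan is simply to invoke Theorem \ref{back_casc_thm} on $\Omega$ and Remark \ref{ene_casc_rem} on $B(\bfx_0,R_1)$ as two independent applications of results already proved, and then to observe that the hypotheses can be met simultaneously by a single solution. Since the direct cascade is a statement about averages over small subballs inside $B(\bfx_0,R_1)$ while the inverse cascade is a statement about averages over the complementary annular regions in $\Omega$, the two conclusions live on disjoint pieces of the scale-location plane and do not interact; nothing needs to be re-derived.

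First I would use the assumption $\tau\ll D$ to pick an auxiliary scale $R_0$ with $\tau\le\gamma R_0<D/2$ for some fixed $0<\gamma<1/\sqrt{2C_0}$; such a choice is available precisely because $\tau/D$ can be made arbitrarily small. Applying Theorem \ref{back_casc_thm} then yields the inverse energy cascade on $\Omega$ over the inertial range (\ref{back_iner_range}), i.e.\ the two-sided bound (\ref{back_ene_casc}) holds for $R_0<R<D/2$ on time intervals $T\ge R_0^2/\nu$.

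Next I would regard $B(\bfx_0,R_1)\subset\Omega$ as playing the role of the reference ball $B(\bfo,R_0)$ in Section \ref{balls}, with $R_0$ replaced by $R_1$ throughout. Shrinking $R_1$ slightly if necessary, one can arrange $B(\bfx_0,3R_1)\subset\Omega$ as demanded by (\ref{omega_ass}), and the cutoff hierarchy of Section 2 can be set up around $\bfx_0$. The hypothesis $\tau_0\ll R_1$ then gives the condition (\ref{ene_scales_con_fin}) of Remark \ref{ene_casc_rem} with $R_0\leftrightarrow R_1$, delivering the direct energy cascade (\ref{ene_casc}) on scales (\ref{ene_inert_range}) inside $B(\bfx_0,R_1)$.

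The only bookkeeping step is to choose a common averaging window; taking $T\ge\max(R_0^2,R_1^2)/\nu$ satisfies both (\ref{T_con}) and (\ref{T_con2}), so both conclusions hold on the same interval $[0,2T]$. There is no real obstacle: the statement is a pure logical superposition of Theorem \ref{back_casc_thm} and Remark \ref{ene_casc_rem}, and the only thing to verify is the compatibility of their side conditions, which the assumptions $\tau\ll D$ and $\tau_0\ll R_1$ make transparent.
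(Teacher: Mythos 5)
Your proposal is correct and matches the paper's intent exactly: the remark is stated as an immediate logical superposition of Theorem \ref{back_casc_thm} (applied on $\Omega$ after choosing an auxiliary $R_0$ with $\tau\le\gamma R_0<D/2$) and Remark \ref{ene_casc_rem} (applied with $B(\bfx_0,R_1)$ in the role of the reference ball), and the paper supplies no further argument. Your additional bookkeeping --- checking $B(\bfx_0,3R_1)\subset\Omega$ and taking $T\ge\max(R_0^2,R_1^2)/\nu$ so both averaging windows coincide --- is exactly the compatibility of side conditions the authors leave implicit.
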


\begin{obs}{\em
Since $\bfu$ is zero on $\partial\Omega$, we may replace in Theorem \ref{back_casc_thm} the ensemble average $\bPhi_R$ with the uniform space average
\[\bPhi_R^u=\frac{1}{D^2}\int\limits_{\Omega}\bPhi_{\bfx, R}\,d\bfx\;.\]
}\end{obs}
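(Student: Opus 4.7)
The plan is to repeat the derivation of Theorem \ref{back_casc_thm} but, instead of summing the local balance (\ref{outer_loc_ene_bal}) over the two-point covering $\{\bfx_1,\bfx_2\}$, to integrate it in $\bfx_0$ against $\frac{1}{D^2}\,d\bfx_0$ over all of $\Omega$. Since (\ref{outer_loc_ene_bal}) holds pointwise in $\bfx_0\in\Omega$ (the derivation uses only (\ref{loc_ene_ineq}) with $\phi=\bphi_{\bfx_0,R}$, which is valid because $\bfu|_{\partial\Omega}=0$ kills the boundary term coming from $\bpsi_{\bfx_0,R}$ not vanishing on $\partial\Omega$), averaging in $\bfx_0$ produces the identity
\[
\bPhi^u_R=\nu\,\bar E^u_R-\frac{1}{D^2}\int_{\Omega}\frac{1}{T}\iint\frac{|\bfu|^2}{2}\bigl(\partial_t\bphi_{\bfx_0,R}+\nu\Delta\bphi_{\bfx_0,R}\bigr)\,d\bfx\,dt\,d\bfx_0,
\]
where $\bar e^u_R:=\frac{1}{D^2}\int_{\Omega}\be_{\bfx,R}\,d\bfx$ and $\bar E^u_R:=\frac{1}{D^2}\int_{\Omega}\bE_{\bfx,R}\,d\bfx$. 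The pointwise estimate (\ref{out_ene_est}), valid for each $\bfx_0$, then bounds the last term by $\frac{C_0}{R_0^2}\bar e^u_R$, yielding the uniform analog of (\ref{outer_Phi_bds}).

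The only new ingredient is a Fubini-type comparison between $\bar e^u_R,\bar E^u_R$ and the global $e,E$. Swapping the order of integration,
\[
\bar E^u_R=\frac{1}{T}\iint|\nabla\bfu|^2\,\eta(t)\left(\frac{1}{D^2}\int_{\Omega}\bpsi_{\bfx_0,R}(\bfy)\,d\bfx_0\right)d\bfy\,dt,
\]
and similarly for $\bar e^u_R$ (with $\bpsi^{2\delta-1}$, which still lies in $[0,1]$). The key geometric fact is that for any fixed $\bfy\in\Omega=B(\bfo,D)$, the set $\{\bfx_0\in\Omega:\bpsi_{\bfx_0,R}(\bfy)=1\}=\{\bfx_0\in\Omega:|\bfx_0-\bfy|\ge R\}$ has area at least $\pi(D^2-R^2)\ge \tfrac{3\pi}{4}D^2$, since $R<D/2$. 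Hence $\tfrac{3\pi}{4}\le\frac{1}{D^2}\int_{\Omega}\bpsi_{\bfx_0,R}(\bfy)\,d\bfx_0\le \pi$, which gives two-sided bounds $c\,e\le \bar e^u_R\le C\,e$ and $c\,E\le\bar E^u_R\le C\,E$ for absolute constants $c,C>0$ (one may take $c=3\pi/4$, $C=\pi$).

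Putting these together, under the hypothesis $\tau\le\gamma R_0$ one obtains
\[
\frac{\nu}{2}\bar E^u_R\!\left(1-2C_0\frac{\tau^2}{R_0^2}\right)\le\bPhi^u_R\le \nu\bar E^u_R\!\left(1+C_0\frac{\tau^2}{R_0^2}\right),
\]
exactly as in (\ref{outer_Phi_bds}) but with $\bar E^u_R$ in place of $\bE_R$. Combining with $\bar E^u_R\asymp E$ and choosing $\gamma<1/\sqrt{2C_0}$ recovers (\ref{back_ene_casc}) with the ensemble average $\lgl\bPhi\rgl_R$ replaced by $\bPhi^u_R$ (and updated absolute constants absorbing the factors $3\pi/4$ and $\pi$). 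The main obstacle I anticipate is purely bookkeeping: the $2\delta-1$ powers appearing in the energy-type weights require verifying that the geometric lower bound on $\int_{\Omega}\bpsi_{\bfx_0,R}^{2\delta-1}(\bfy)\,d\bfx_0$ survives, but this is immediate because $\bpsi^{2\delta-1}=1$ wherever $\bpsi=1$, so the same subset of full $\bfx_0$-measure $\ge \tfrac{3\pi}{4}D^2$ serves as the lower bound.
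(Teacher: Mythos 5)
Your argument is correct and follows the skeleton the paper intends (the remark itself is stated without proof, but the parallel Remark \ref{unif_ave-obs1} signals the strategy: integrate the local balance in the center variable instead of summing over the covering, then compare the resulting uniform averages with the reference quantities). The one place you genuinely depart from the paper is in how that comparison is established. The paper's tool, Lemma \ref{U_ave_lem}, is proved by a Riemann-sum argument over cells of size $R/2$ combined with the optimal-covering bounds; you instead swap the order of integration and use the geometric fact that, for fixed $\bfy\in\Omega$, the outer cut-off $\bpsi_{\bfx_0,R}(\bfy)$ equals $1$ for every $\bfx_0$ outside $B(\bfy,R)$, a set occupying at least three quarters of $\Omega$ since $R<D/2$. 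This Fubini route is simpler and better adapted to the outer-region cut-offs (where the ``covering'' consists of only two sets and Lemma \ref{U_ave_lem} does not literally apply), and it yields explicit absolute constants $3\pi/4$ and $\pi$; the price is only a harmless renormalization of $\bar{c}_{0,\gamma}$, $\bar{c}_{1,\gamma}$ and of the admissible range of $\gamma$, which you correctly acknowledge. You also correctly identify the role of the hypothesis $\bfu|_{\partial\Omega}=0$: it is what legitimizes using (\ref{loc_ene_ineq}) with a test function $\bphi_{\bfx_0,R}$ that does not vanish near $\partial\Omega$. The only blemish, inherited from the paper's own inequality $\be_R\le e$ rather than introduced by you, is that the upper comparison of $\bar{e}^u_R$ with $e$ silently replaces the weight $\eta^{2\delta-1}$ by $\eta$; this does not affect the conclusion since the $\be$-term is the error term controlled by $e\le\gamma^2R_0^2E$.
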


\begin{obs}{\em
We work with the no-slip boundary condition on $\Omega$, but the
results of this section (with slightly modified $e$ and $E$) will
hold for space periodic or vanishing at infinity flows as well.
}\end{obs}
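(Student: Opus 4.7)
The plan is to revisit the proof of Theorem \ref{back_casc_thm} and isolate where the no-slip condition (\ref{no_slip}) was actually used, then indicate how to close the argument under each of the other two boundary settings by a suitable modification of $e$ and $E$.

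First I would trace the role of (\ref{no_slip}) in Section 5. It enters in exactly one place: validating the localized energy identity (\ref{outer_loc_ene_bal}) for the test function $\bphi_{\bfx_0,R}$. Because the cutoff $\bpsi$ defined in (\ref{tpsi_def}) equals $1$ on $D(\bfx_0,R)$, it does not vanish near $\partial\Omega$; when one multiplies NSE by $\bphi\,\bfu$ and integrates by parts, the boundary terms disappear only because $\bfu|_{\partial\Omega}=0$. Every other step---bounding $|\partial_t\bphi+\nu\Delta\bphi|$ via the (\ref{phi_bd})-type estimates, covering $\Omega$ by two outer regions, and comparing $\be_R,\bE_R$ with $e,E$---depends only on the geometry of the cutoff and the size of $\Omega$, not on the boundary condition.

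For space-periodic flows, I would interpret $\Omega$ as the periodic cell of side $D$ and extend $\bpsi$ periodically. Integration-by-parts boundary terms now vanish by periodicity rather than by the Dirichlet condition, so the local energy identity (\ref{outer_loc_ene_bal}) carries over verbatim. The definitions (\ref{back_e_def})--(\ref{back_E_def}) of $e$, $E$ still make sense (integration over the fundamental domain), and the two-region covering argument is unaffected, so (\ref{outer_Phi_bds}) and hence Theorem \ref{back_casc_thm} follow with no further modification. For flows on $\mR$ vanishing at infinity, compact support of $\bphi$ must be imposed by hand. I would fix a large reference scale $D$ with $R_0<D/2$, introduce a smooth outer cutoff $\chi_D$ supported in $B(\bfo,D)$ that equals $1$ on $B(\bfo,D/2)$ and satisfies the analogue of (\ref{psi0}) at scale $D$, then replace $\bpsi_{\bfx_0,R}$ with $\chi_D\,\bpsi_{\bfx_0,R}$; equivalently, redefine
\[
e = \frac{1}{T}\iint \tfrac{|\bfu|^2}{2}\,\eta\,\chi_D\,d\bfx\,dt,\qquad
E = \frac{1}{T}\iint |\nabla\bfu|^2\,\eta\,\chi_D\,d\bfx\,dt.
\]
Decay at infinity guarantees these integrals are finite, and the integration by parts now produces no boundary contribution at all. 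The extra terms involving $\nabla\chi_D$ and $\Delta\chi_D$ appearing on the right-hand side of the modified (\ref{outer_loc_ene_bal}) are absorbed into the analogue of (\ref{out_ene_est}) provided $D\gg R_0$, since the prefactors $C_0/D$ and $C_0/D^2$ are negligible compared to $C_0/R_0$ and $C_0/R_0^2$; the remainder of the proof then proceeds unchanged.

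The main obstacle lies in the vanishing-at-infinity case: confirming that the $\chi_D$-derivative contributions are genuinely lower-order so that (\ref{outer_Phi_bds}) survives with essentially the same constants, and that the two-region outer covering is still adequate once the effective ambient domain is $B(\bfo,D)$ rather than all of $\mR$. Both the periodic extension and the core flux estimates are then immediate from the argument already in place.
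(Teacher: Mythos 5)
The paper states this remark without proof, so there is nothing to compare line by line; judged on its own terms, your proposal correctly isolates the single point where the no-slip condition enters, namely the validity of the localized energy balance (\ref{outer_loc_ene_bal}) for the cut-off $\bphi=\eta\bpsi$, which is not compactly supported in $\Omega$ because $\bpsi\equiv 1$ on $D(\bfx_0,R)$. Your treatment of the periodic case is exactly right and needs no further comment.

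In the vanishing-at-infinity case, however, your device of multiplying by an outer cut-off $\chi_D$ creates an issue you pass over too quickly. Writing $\nabla(\chi_D\bpsi)=\chi_D\nabla\bpsi+\bpsi\nabla\chi_D$, the second piece contributes the cubic term
\[
\frac{1}{T}\iint\Bigl(\tfrac{1}{2}|\bfu|^2+p\Bigr)\,\bfu\cdot\nabla\chi_D\;d\bfx\,dt ,
\]
supported in the layer $B(\bfo,D)\setminus B(\bfo,D/2)$. Unlike the terms involving $\partial_t\chi_D$ and $\Delta\chi_D$, this is not controlled by $e$ or $E$ and is not small merely because $|\nabla\chi_D|\lesssim C_0/D$; it is a genuine additional flux across the outer layer. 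Folding it into a ``redefined'' $\bPhi$ changes the quantity the theorem estimates (it is no longer the flux out of $B(\bfx_0,R)$), so to recover Theorem \ref{back_casc_thm} as stated you must either show this term tends to zero as $D\to\infty$ using the decay of $\bfu$ and $p$, or dispense with $\chi_D$ altogether. The latter is the cleaner route and is presumably what the authors intend by ``slightly modified $e$ and $E$'': for $\bfu(t)\in L^2\cap H^1(\mR)$ one has $|\bfu|^2\in L^2$ and hence $p\in L^2$ by the Calder\'on--Zygmund representation of the pressure, so every integral in (\ref{outer_loc_ene_bal}) converges against the bounded cut-off $\bphi$ (whose gradient is compactly supported), the integration by parts over all of $\mR$ is justified by approximation, and $e$, $E$ are simply redefined as the corresponding integrals over $\mR$ (or over the periodic cell). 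With that adjustment the two-region covering and the bounds (\ref{out_ene_est})--(\ref{outer_Phi_bds}) go through verbatim, as you say.
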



\section{Locality of the averaged fluxes}\label{locality}

Let $\bfx_0\in B(\bfo,R_0)$, $0<R_2<R_1\le R_0$. In order to study the
enstrophy flux through the shell $A(\bfx_0,R_1,R_2)$ between the
spheres $S(\bfx_0,R_2)$ and $S(\bfx_0,R_1)$ we
will consider the modified cut-off functions
$\phi=\phi_{\bfx_0,T,R_1,R_2}(t,\bfx)=\eta(t)\psi(\bfx)$ to be used
in the local enstrophy balance (\ref{loc_enst_eq}) where
$\eta=\eta_T(t)$ as in (\ref{eta_def}) and
$\psi=\psi_{\bfx_0,R_1,R_2} \in \mathcal{D}(A(\bfx_0,2R_1,R_2/2))$ satisfying
\begin{equation}\label{psi_shells_def}\begin{aligned}
&0\le\psi\le\psi_0,\quad\psi=1\ \mbox{on}\
A(\bfx_0,R_1,R_2)\cap B(\bfo,R_0),\\
&\frac{|\nabla\psi|}{\psi^{\delta}}\le\frac{C_0}{\tilde{R}},\quad
\frac{|\Delta\psi|}{\psi^{2\delta-1}}\le\frac{C_0}{\tilde{R}^2}\;,
\end{aligned}
\end{equation}
where $\psi_0$ is defined in (\ref{psi0}) and
\begin{equation}\label{tilde_R}
\tilde{R}=\tilde{R}(R_1,R_2)=\min\{R_2,R_1-R_2\}\;.
\end{equation}

Use $\phi$ to define the time-averaged energy, enstrophy, and palinstrophy in the shell between
the spheres $S(\bfx_0,R_2)$ and $S(\bfx_0,R_1)$ by
\begin{equation}
\begin{aligned}
&e_{\bfx_0,R_1,R_2}=\frac{1}{T}\iint\frac{1}{2}
|\bfu|^2\phi^{2\delta-1}\,d\bfx\,dt\;,\\
&E_{\bfx_0,R_1,R_2}=\frac{1}{T}\iint\frac{1}{2}
|\omega|^2\phi^{2\delta-1}\,d\bfx\,dt
\ \left(\;  E'_{\bfx_0,R_1,R_2}=\frac{1}{T}\iint\frac{1}{2}
|\omega|^2\phi\,d\bfx\,dt\; \right)\,,\\
&P_{\bfx_0,R_1,R_2}=\frac{1}{T}\iint
|\nabla\otimes\omega|^2\phi\,d\bfx\,dt\;.
\end{aligned}
\end{equation}
Then,
\begin{equation}
\begin{aligned}
&\tau_{\bfx_0,R_1,R_2}=\left(\frac{e_{\bfx_0,R_1,R_2}}{E'_{\bfx_0,R_1,R_2}}\right)^{1/2}\;,\\
&\sigma_{\bfx_0,R_1,R_2}=\left(\frac{E_{\bfx_0,R_1,R_2}}{P_{\bfx_0,R_1,R_2}}\right)^{1/2}
\end{aligned}
\end{equation}
are the \emph{local} Taylor and Kraichnan length scales associated with the shell
$A(\bfx_0,R_1,R_2)$.

Also define the localized time-averaged flux
through the shell between the spheres $S(\bfx_0,R_2)$ and
$S(\bfx_0,R_1)$ as
\begin{equation}\label{shell_enst_flux_def}
\Psi_{\bfx_0,R_1,R_2}=\frac{1}{T}\iint
\frac{1}{2}|\omega|^2\,\bfu\cdot\nabla\phi\,d\bfx\,dt\;.
\end{equation}
Note that $\phi$ can be chosen radially (almost radially in case $A(\bfx_0,R_1,R_2)\not\subset B(\bfo,R_0)$) so that
$\Psi_{\bfx_0,R_1,R_2}=\Psi_{\bfx_0,R_1}-\Psi_{\bfx_0,R_2/2}$. Moreover, (\ref{loc_enst_eq}) implies that this flux
contributes to increase $E_{\bfx_0,R_1,R_2}$ on average. Thus $\Psi_{\bfx_0,R_1,R_2}$ can be viewed as
{\em total enstrophy flux into}  the shell $A(\bfx_0,R_1,R_2)$.

Similarly, total {\em energy} flux into the shell $A(\bfx_0,R_1,R_2)$ is defined by
\begin{equation}\label{shell_ene_flux_def}
\Phi_{\bfx_0,R_1,R_2}=\frac{1}{T}\iint
\left(\frac{1}{2}|\bfu|^2+p\right)\,\bfu\cdot\nabla\phi\,d\bfx\,dt\;.
\end{equation}

Note that $\phi$ satisfies similar estimates to (\ref{phi_bd}) (with $R$ replaced by $\tilde{R}$), and so, if $T\ge R_0^2/\nu$, the local
enstrophy balance (\ref{loc_enst_eq}) leads to

\begin{equation}
\begin{aligned}
\Psi_{\bfx_0,R_1,R_2} &\ge \nu P_{\bfx_0,R_1,R_2}-\nu \frac{C_0}{\tilde{R}^2}E_{\bfx_0,R_1,R_2}\\
&= \nu
P_{\bfx_0,R_1,R_2}\,\left(1-C_0\frac{\sigma^2_{\bfx_0,R_1,R_2}}{\tilde{R}^2}\right)\;,
\end{aligned}
\end{equation}
for any $\bfx_0\in B(\bfo,R_0)$
and any $0<R_2<R_1\le R_0$.

Similarly, utilizing (\ref{loc_enst_eq}) again, we obtain an upper
bound
\begin{equation}\label{shells_up_bd}
\begin{aligned}
\Psi_{\bfx_0,R_1,R_2} &\le\nu P_{\bfx_0,R_1,R_2}+\frac{C_0}{\tilde{R}^2}E_{\bfx_0,R_1,R_2}\\
&= \nu
P_{\bfx_0,R_1,R_2}\,\left(1+C_0\frac{\sigma^2_{\bfx_0,R_1,R_2}}{\tilde{R}^2}\right)\;,
\end{aligned}
\end{equation}

Combining the two bounds on $\Psi_{\bfx_0,R_1,R_2}$ we obtain
\begin{equation}\label{Psi_loc_bd}
 \nu P_{\bfx_0,R_1,R_2}\,\left(1-C_0\frac{\sigma^2_{\bfx_0,R_1,R_2}}{\tilde{R}^2}\right)
\le\Psi_{\bfx_0,R_1,R_2}\le
\nu P_{\bfx_0,R_1,R_2}\,\left(1+C_0\frac{\sigma^2_{\bfx_0,R_1,R_2}}{\tilde{R}^2}\right)\;;
\end{equation}
thus, we have arrived at our first locality result.

\begin{thm}\label{shells_thm1}
Let $0<\gamma<1$, $\bfx_0\in B(\bfo,R_0)$ and $0<R_2<R_1\le R_0$. If
\begin{equation}\label{scales_con2_fin}
\sigma_{\bfx_0,R_1,R_2}<\frac{\gamma}{C_0^{1/2}}\tilde{R}\;
\end{equation}
with $\tilde{R}$ defined by (\ref{tilde_R}), then
\begin{equation}\label{ener_casc1}
(1-\gamma^2)\,\nu P_{\bfx_0,R_1,R_2}\le\Psi_{\bfx_0,R_1,R_2}\le
(1+\gamma^2)\,\nu P_{\bfx_0,R_1,R_2}\;
\end{equation}
where the time average is taken over an interval of time $[0,T]$
with $T\ge R_0^2/\nu$.
\end{thm}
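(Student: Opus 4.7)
The plan is to read the theorem off from the two-sided bound (\ref{Psi_loc_bd}) that the authors have essentially already derived in the paragraphs preceding the statement. The argument splits into three short steps: (i) extract a flux identity from the local enstrophy equation, (ii) control the test-function remainder by a constant times $\sigma^2_{\bfx_0,R_1,R_2}/\tilde{R}^2$, and (iii) substitute the scale hypothesis.

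For step (i), I would apply (\ref{loc_enst_eq}) with $\phi = \eta\psi$, taking $\eta$ as in (\ref{eta_def}) and $\psi = \psi_{\bfx_0,R_1,R_2}$ as in (\ref{psi_shells_def}). Rearranging and dividing by $T$ gives
\[
\Psi_{\bfx_0,R_1,R_2} = \nu P_{\bfx_0,R_1,R_2} - \frac{1}{T}\iint \frac{1}{2}|\omega|^2 (\partial_t\phi + \nu\Delta\phi)\,d\bfx\,dt,
\]
so the whole task reduces to bounding the remainder integral. For step (ii), since $\psi$ has spatial derivative estimates controlled by $\tilde{R}$ rather than $R$, and since $T \ge R_0^2/\nu \ge \tilde{R}^2/\nu$ (because $\tilde{R}\le R_0$), the analogue of (\ref{phi_bd}) yields
\[
|\partial_t\phi| + \nu|\Delta\phi| \le \frac{C_0\nu}{\tilde{R}^2}\phi^{2\delta-1}.
\]
Inserting this into the remainder and recognizing $E_{\bfx_0,R_1,R_2}$ as the resulting weighted integral of $|\omega|^2/2$ produces exactly (\ref{Psi_loc_bd}).

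For step (iii), the hypothesis $\sigma_{\bfx_0,R_1,R_2} < \gamma\tilde{R}/C_0^{1/2}$ is equivalent to $C_0\sigma^2_{\bfx_0,R_1,R_2}/\tilde{R}^2 < \gamma^2$, and substituting this into (\ref{Psi_loc_bd}) yields (\ref{ener_casc1}). The main obstacle, such as it is, is pure bookkeeping: verifying that the condition $T \ge R_0^2/\nu$ leaves enough room to absorb the time derivative of the cut-off at the smaller scale $\tilde{R}$ (which requires $\tilde{R}\le R_0$, automatic here), and that the localized palinstrophy and enstrophy in the sandwich are defined with the very same $\phi$ used for the flux — both of which are immediate from the definitions. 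In essence, the theorem is a packaging of (\ref{Psi_loc_bd}) under an explicit smallness condition on $\sigma_{\bfx_0,R_1,R_2}$, with no new estimate needed beyond what is already in hand.
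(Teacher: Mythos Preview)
Your proposal is correct and follows exactly the paper's own argument: derive the flux identity from (\ref{loc_enst_eq}), bound $|\partial_t\phi|+\nu|\Delta\phi|$ by $C_0\nu\tilde{R}^{-2}\phi^{2\delta-1}$ using $T\ge R_0^2/\nu\ge\tilde{R}^2/\nu$, arrive at (\ref{Psi_loc_bd}), and then insert the hypothesis $C_0\sigma^2_{\bfx_0,R_1,R_2}/\tilde{R}^2<\gamma^2$. There is nothing to add; the theorem is indeed just a repackaging of (\ref{Psi_loc_bd}).
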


\begin{obs}{\em
The theorem states that if the local Kraichnan scale
$\sigma_{\bfx_0,R_1,R_2}$, associated with a shell
$A(\bfx_0,R_1,R_2)$, is smaller than the thickness of the shell
$\tilde{R}$ (a local integral scale), then the time average of the
total enstrophy flux into that shell towards its center $\bfx_0$
is comparable to the time average of the localized palinstrophy in the
shell, $P_{\bfx_0,R_1,R_2}$. Thus, under the assumption
(\ref{scales_con2_fin}) the flux through the shell
$A(\bfx_0,R_1,R_2)$ depends essentially only on the palinstrophy
contained in the neighborhood of the shell, regardless of what
happens at the other sales, making (\ref{scales_con2_fin}) a sufficient
condition for the \emph{locality} of the flux through
$A(\bfx_0,R_1,R_2)$. }\end{obs}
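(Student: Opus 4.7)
The proof plan is to observe that Theorem \ref{shells_thm1} is essentially an immediate corollary of the two-sided bound (\ref{Psi_loc_bd}) already derived just before the statement, so the work is almost entirely in re-expressing the hypothesis and inserting it into that inequality. Concretely, I would first record that, by the construction of $\psi$ in (\ref{psi_shells_def}), the test function $\phi=\eta\psi$ satisfies the shell analogue of (\ref{phi_bd}): provided $T\ge R_0^2/\nu$, one has
\begin{equation*}
\bigl|\partial_t\phi+\nu\Delta\phi\bigr|\le \nu\,\frac{C_0}{\tilde R^{\,2}}\,\phi^{2\delta-1},
\end{equation*}
the only novelty being that the length scale appearing in the bound is $\tilde R=\min\{R_2,R_1-R_2\}$ rather than a single radius. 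Substituting this into the local enstrophy equation (\ref{loc_enst_eq}) produces
\begin{equation*}
\Bigl|\Psi_{\bfx_0,R_1,R_2}-\nu P_{\bfx_0,R_1,R_2}\Bigr|\le \nu\,\frac{C_0}{\tilde R^{\,2}}\,E_{\bfx_0,R_1,R_2},
\end{equation*}
and the definition $\sigma^2_{\bfx_0,R_1,R_2}=E_{\bfx_0,R_1,R_2}/P_{\bfx_0,R_1,R_2}$ converts the right-hand side into $C_0\,(\sigma^2_{\bfx_0,R_1,R_2}/\tilde R^{\,2})\,\nu P_{\bfx_0,R_1,R_2}$, which is exactly (\ref{Psi_loc_bd}).

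With (\ref{Psi_loc_bd}) in hand, the theorem is purely algebraic. The hypothesis (\ref{scales_con2_fin}) is equivalent to $C_0\,\sigma^2_{\bfx_0,R_1,R_2}/\tilde R^{\,2}<\gamma^2$, and substituting this bound into (\ref{Psi_loc_bd}) yields directly (\ref{ener_casc1}).

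Since the nontrivial analytic work (the cut-off estimates, the local enstrophy balance, and the scale identification $\tilde R$) has already been carried out in deriving (\ref{Psi_loc_bd}), there is no genuine obstacle to overcome. The one step that deserves a line of justification is the choice of $\tilde R$ as the effective scale in the cut-off estimates: $\psi$ must fall from $1$ to $0$ both across the inner rim of thickness $\sim R_2$ and across the outer rim of thickness $\sim R_1-R_2$, so the worst (smallest) resolution is $\tilde R$, and this is the scale that controls $|\nabla\psi|/\psi^{\delta}$ and $|\Delta\psi|/\psi^{2\delta-1}$. After this verification, the theorem reduces to reading off a two-line consequence of (\ref{Psi_loc_bd}).
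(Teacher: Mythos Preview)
Your proposal is correct and mirrors the paper's own derivation: the paper obtains the two-sided bound (\ref{Psi_loc_bd}) from the local enstrophy balance (\ref{loc_enst_eq}) and the cut-off estimates with scale $\tilde R$, and then Theorem~\ref{shells_thm1} (which the Remark interprets) follows immediately by inserting the hypothesis $C_0\,\sigma^2_{\bfx_0,R_1,R_2}/\tilde R^{\,2}<\gamma^2$ into (\ref{Psi_loc_bd}). Note that the statement you were given is a Remark with no formal proof in the paper---it is an interpretive comment on Theorem~\ref{shells_thm1}---but your write-up correctly reconstructs the argument behind that theorem.
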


\begin{obs}{\em
Similarly as in the case of condition (\ref{scales_con_fin}), we can
observe that condition (\ref{scales_con2_fin}) can be viewed as a
requirement that the time average of a Poincar\'e-like inequality on
the shell is not saturating making it plausible in the case of
intense fluid activity in a neighborhood of the shell. }\end{obs}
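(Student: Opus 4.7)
The plan is to substantiate the remark by unrolling the definition of $\sigma_{\bfx_0,R_1,R_2}$ and exhibiting (\ref{scales_con2_fin}) as a non-saturation statement for a weighted Poincar\'e-type inequality on the shell. First, I would square (\ref{scales_con2_fin}) and substitute from the shell-versions of $E_{\bfx_0,R_1,R_2}$ and $P_{\bfx_0,R_1,R_2}$ defined just above to rewrite the hypothesis in its expanded form
\begin{equation*}
\frac{1}{T}\iint |\omega|^2\,\phi^{2\delta-1}\,d\bfx\,dt \;<\; \frac{\gamma^2}{C_0}\,\tilde{R}^{2}\, \frac{1}{T}\iint |\nabla\otimes\omega|^2\,\phi\,d\bfx\,dt,
\end{equation*}
with $\phi=\eta\psi$ and $\psi$ supported in the thickened shell $A(\bfx_0,2R_1,R_2/2)$ whose linear thickness is of order $\tilde{R}$.

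Next, I would invoke the classical Poincar\'e inequality $\int|f|^2\le C_P\,\tilde{R}^{2}\int|\nabla f|^2$ on any bounded region of $\mR$ of diameter of order $\tilde{R}$ for $f$ compactly supported inside. Applied to $\omega\,\psi^{(2\delta-1)/2}$, a Leibniz-rule computation together with the cut-off control $|\nabla\psi|/\psi^{\delta}\le C_0/\tilde{R}$ from (\ref{psi_shells_def}) absorbs the weight-gradient cross-term and produces the weighted form
\begin{equation*}
\int |\omega|^2\,\psi^{2\delta-1}\,d\bfx \;\le\; C_P'\,\tilde{R}^{2}\int|\nabla\otimes\omega|^2\,\psi\,d\bfx,
\end{equation*}
pointwise in $t$, with $C_P'$ an absolute constant depending only on the dimension and on $C_0$. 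Integrating against $\eta$ and dividing by $T$ recovers precisely the right-hand side of the rewritten (\ref{scales_con2_fin}), up to the numerical factor $\gamma^{2}/(C_0 C_P')$.

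The two displays together reveal that (\ref{scales_con2_fin}) is equivalent to the statement that the time-averaged Poincar\'e quotient
$\int|\omega|^2\phi^{2\delta-1}\,d\bfx\,dt\,/\,\bigl(\tilde{R}^{2}\int|\nabla\otimes\omega|^2\phi\,d\bfx\,dt\bigr)$
remains strictly below the universal threshold $\gamma^{2}/C_0$, i.e.\ that the Poincar\'e inequality on the shell is \emph{not saturated} on average over $[0,2T]$. The only technical care required is in the weight-absorption step, so that the constant $C_P'$ is genuinely solution-independent; once that is in place, no further obstacle arises. The plausibility claim is then heuristic in nature: saturation of Poincar\'e requires $\omega$ to behave like a lowest-mode eigenfunction on the shell, whereas in regions of intense fluid activity one expects $|\nabla\otimes\omega|$ to be much larger than $|\omega|/\tilde{R}$, driving the quotient well below the threshold and making (\ref{scales_con2_fin}) the generic situation in turbulent regimes.
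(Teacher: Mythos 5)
Your opening move --- squaring (\ref{scales_con2_fin}) and substituting the definitions of $E_{\bfx_0,R_1,R_2}$ and $P_{\bfx_0,R_1,R_2}$ to display the hypothesis as
\[
\frac{1}{T}\iint |\omega|^2\phi^{2\delta-1}\,d\bfx\,dt<\frac{\gamma^2}{C_0}\,\tilde{R}^2\,\frac{1}{T}\iint|\nabla\otimes\omega|^2\phi\,d\bfx\,dt
\]
--- is exactly the content of the remark; it mirrors what the paper does explicitly for the global condition in Remark \ref{Rem_4.2}, and nothing more is asserted there. The remark is interpretive: the displayed inequality has the \emph{form} of a Poincar\'e inequality on a region of diameter of order $\tilde{R}$ with a prescribed small constant, and demanding that it hold for the flow in question is demanding that the corresponding Poincar\'e quotient stay below that threshold, which is heuristically generic when $|\nabla\otimes\omega|\gg|\omega|/\tilde{R}$. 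Your closing paragraph states this correctly.

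The middle step, however, contains a genuine error. The weighted inequality $\int|\omega|^2\psi^{2\delta-1}\,d\bfx\le C_P'\,\tilde{R}^2\int|\nabla\otimes\omega|^2\psi\,d\bfx$ with a solution-independent constant is false: take $\omega$ constant and nonzero on the support of $\psi$ at some time $t$; the right-hand side vanishes while the left-hand side does not. The failure sits precisely in the ``weight-absorption'' step you gloss over. Writing $f=\omega\psi^{(2\delta-1)/2}$, the Dirichlet--Poincar\'e inequality on the thickened shell does control $\int|f|^2$ by $\tilde{R}^2\int|\nabla f|^2$, but $\nabla f$ contains the term $\omega\,\nabla\bigl(\psi^{(2\delta-1)/2}\bigr)$, whose square integrates to a quantity of order $\tilde{R}^{-2}\int|\omega|^2\psi^{4\delta-3}\ge\tilde{R}^{-2}\int|\omega|^2\psi^{2\delta-1}$ (since $4\delta-3\le 2\delta-1$ and $0\le\psi\le1$); after multiplying by $\tilde{R}^2$ this is comparable to, or larger than, the left-hand side you are trying to bound, so it cannot be absorbed, and no constant $C_P'$ rescues the estimate --- $\omega$ itself has neither zero boundary values nor zero mean on the shell, which is what a genuine Poincar\'e inequality would require. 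Fortunately this step is not needed: the remark does not claim that a Poincar\'e inequality actually holds for $\omega$ on the shell, only that (\ref{scales_con2_fin}) \emph{reads as} the statement that a Poincar\'e-like quotient is not saturating. Deleting the purported derivation of the weighted inequality, your first and last paragraphs already constitute the intended justification.
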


In order to further study the locality of the enstrophy flux, we will
estimate the ensemble averages of the fluxes through the shells
$A(\bfx_i,2R,R)$ of thickness $\tilde{R}=R$. Since we are interested
in the shells inside $B(\bfo,R_0)$, we require the lattice points
$\bfx_i$ to satisfy
\begin{equation}\label{x_i_shells_con}
B(\bfx_i,R)\subset B(\bfo,R_0)\;.
\end{equation}
To each $A(\bfx_i,2R,R)$ we associate a test function
$\phi_i=\eta\psi_i$ where $\eta$ satisfies (\ref{eta_def}) and
$\psi_i$ satisfies (\ref{psi_shells_def}) with $\bfx_0=\bfx_i$ and $\tilde{R}=R$.

If  $A(\bfx_i,2R,R)\not\subset B(\bfo,R_0)$ (i.e. we have $B(\bfx_i,R)\subset B(\bfo,R_0)$
and $B(\bfx_i,2R)\setminus B(\bfo,R_0)\not=\emptyset$), then
$\psi_i\in\mathcal{D}(B(\bfo,2R_0))$ with $\psi_i=1\ \mbox{on}\ A(\bfx_0,2R,R)
\cap B(\bfo,R_0)$ satisfying, in addition to (\ref{psi_shells_def}), the
following:
\begin{equation}\label{psi_shells_def_add1}
\begin{aligned}
&
\psi_i=\psi_0\ \mbox{on the part of the cone in}\ \mR\ \mbox{centered at
zero and passing}\\ &\mbox{ through}\   S(\bfo,R_0)\cap B(\bfx_i,2R)\
\mbox{between}\  S(\bfo,R_0)\ \mbox{and}\
S(\bfo,2R_0)
\end{aligned}
\end{equation}
and
\begin{equation}\label{psi_shells_def_add2}
\begin{aligned}
&
\psi_i=0\ \mbox{on}\ B(\bfo,R_0)\setminus A(\bfx_i,4R,R/2)\ \mbox{and outside the part of the}\\ &
\mbox{cone in}\ \mR\ \mbox{centered at zero and passing through}\ S(\bfo,R_0)\cap B(\bfx_i,4R)\\
 &
 \mbox{between}\  S(\bfo,R_0)\ \mbox{and}\ S(\bfo,2R_0).
\end{aligned}
\end{equation}

Figure \ref{shall_fig} illustrates the definition of $\psi_i$ in the case
$A(\bfx_i,2R,R)$ is not entirely contained in
$B(\bfo,R_0)$.

\begin{figure}
  \centerline{\includegraphics[scale=1, viewport=173 461 509 669, clip] {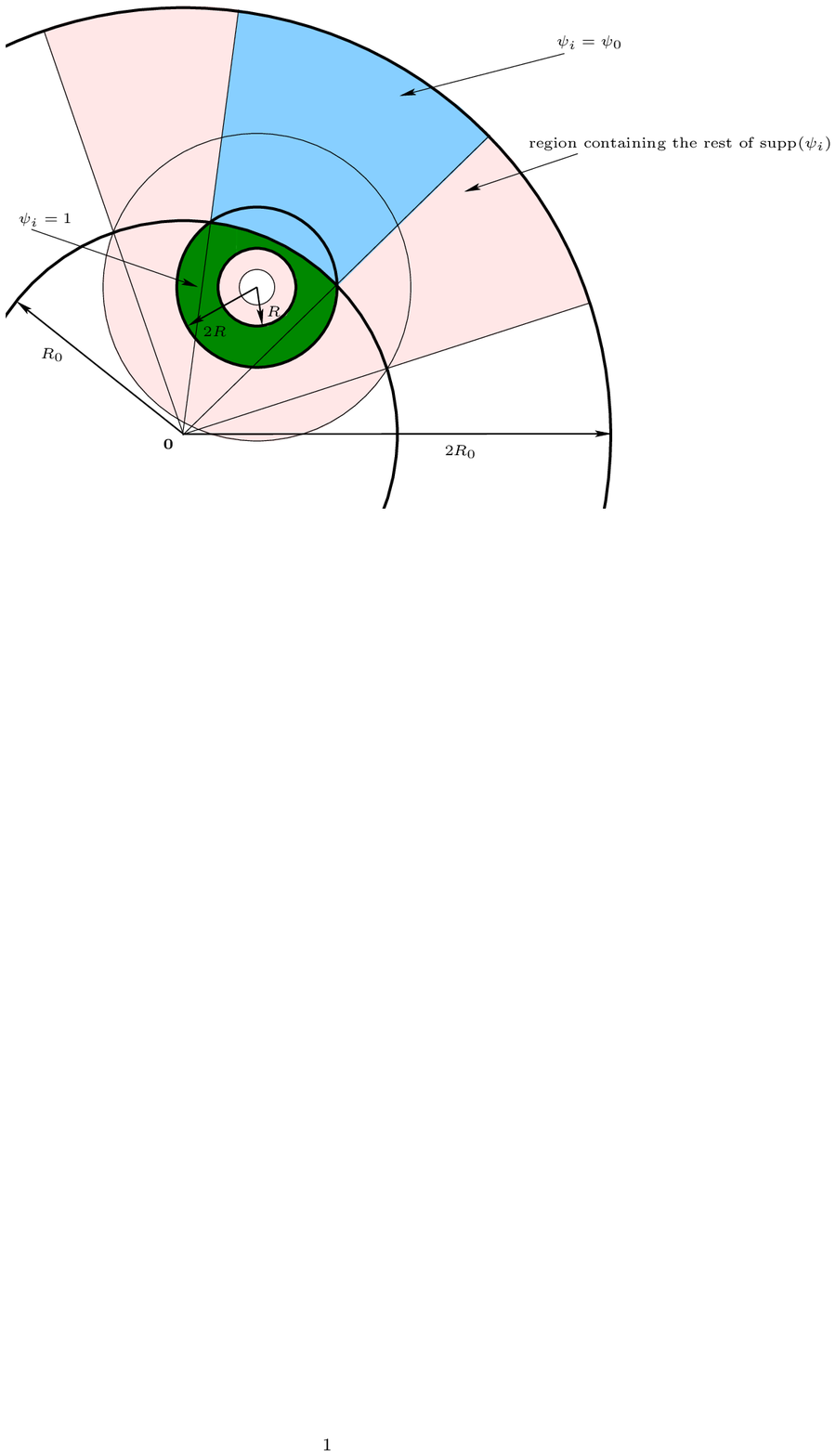}}
  \caption{Regions of supp$(\psi_i)$ in the case $A(\bfx_i,2R,R)\not\subset B(\bfo,R_0)$.}
  \label{shall_fig}
\end{figure}

Similarly as in the previous section, we consider {\em optimal}
coverings of $B(\bfo,R_0)$ by shells $\{A(\bfx_i,2R,R)\}_{i=1,n}$
such that (\ref{x_i_shells_con}) is satisfied,
\begin{equation}\label{shells_n_con1}
\left(\frac{R_0}{R}\right)^2\le n\le
K_1\left(\frac{R_0}{R}\right)^2,
\end{equation}
and
\begin{equation}\label{Shells_n_con2}
\mbox{any}\  \bfx\in B(\bfo,R_0)\  \mbox{is covered by at most}\  K_2\
\mbox{shells}\ A(\bfx_i,4R,R/2)\;.
\end{equation}

Introduce
\begin{equation}
\begin{aligned}
&\tilde{e}_{2R,R}=\frac{1}{n}\sum\limits_{i=1}^{n}e_{\bfx_i,2R,R}\;,\\
&\tilde{E}_{2R,R}=\frac{1}{n}\sum\limits_{i=1}^{n}E_{\bfx_i,2R,R}
\quad\left(\ \tilde{E}'_{2R,R}=\frac{1}{n}\sum\limits_{i=1}^{n}E_{\bfx_i,2R,R}\ \right)\;,\\
&\tilde{P}_{2R,R}=\frac{1}{n}\sum\limits_{i=1}^{n}P_{\bfx_i,2R,R}\;,
\end{aligned}
\end{equation}
and
\begin{equation}
\begin{aligned}
&\tilde{\Phi}_{2R,R}=\frac{1}{n}\sum\limits_{i=1}^{n}\Phi_{\bfx_i,2R,R}\;,\\
&\tilde{\Psi}_{2R,R}=\frac{1}{n}\sum\limits_{i=1}^{n}\Phi_{\bfx_i,2R,R}\;
\end{aligned}
\end{equation}
the ensemble averages of the time-averaged energy, enstrophy, palinstrophy, and
energy and enstrophy fluxes on the shells of thickness $R$ corresponding to the
covering $\{A(\bfx_i,2R,R)\}_{i=1,n}$\,.

Taking the ensemble averages in (\ref{loc_enst_eq}) and applying the
bounds for derivatives of $\phi_i$, we arrive at
\begin{equation}\label{low_R12_bd}
\tilde{\Psi}_{2R,R}\ge \nu\tilde{P}_{2R,R}-\nu\frac{C_0}{R^2}\,\tilde{E}_{2R,R}\;,
\end{equation}
provided $T\ge R_0^2/\nu$.

If the covering is optimal, i.e., if
(\ref{x_i_shells_con}) and (\ref{shells_n_con1}-\ref{Shells_n_con2}) hold, then
\begin{equation}\label{E_R12_E_ineq}
\tilde{P}_{2R,R}\ge \frac{1}{n}\tilde{P}\ge \frac{1}{K_1}\left(\frac{R}{R_0}\right)^{2}\tilde{P}_0
\end{equation}
and
\begin{equation}\label{e_R12_e_ineq}
\tilde{E}_{2R,R}\le \frac{K_2}{n}\tilde{E}\le
K_2\left(\frac{R}{R_0}\right)^2\tilde{E}_0\;
\end{equation}
where
\begin{equation}
\tilde{P}_0=\frac{1}{T}\iint
|\nabla\otimes\omega|^2\phi_0\,d\bfx\,dt=R_0^2\,P_0\;
\end{equation}
is the time average of the localized palinstrophy on $B(\bfo,R_0)$ and
\begin{equation}
\tilde{E}_0=\frac{1}{2}\frac{1}{T}\iint
|\omega|^2\phi_0^{2\delta-1}\,d\bfx\,dt=R_0^2\, E_0\;
\end{equation}
is the time average of  the localized enstrophy on $B(\bfo,R_0)$ with
$\phi_0$ is defined by (\ref{phi0}).

Let us note that
\begin{equation}\label{tau_obs}
\sigma_0=\left(\frac{E_0}{P_0}\right)^{1/2}=\left(\frac{\tilde{E}_0}{\tilde{P}_0}\right)^{1/2}\;.
\end{equation}

Utilizing (\ref{E_R12_E_ineq}), (\ref{e_R12_e_ineq}) and
(\ref{tau_obs}) in the inequality (\ref{low_R12_bd}) gives
\begin{equation}\label{Psi_R12_low}
\tilde{\Psi}_{2R,R}\ge
\frac{1}{K_1}\left(\frac{R}{R_0}\right)^2\,\nu\tilde{P}_0
\left(1-C_0K_1K_2\frac{\sigma_0^2}{R^2}\right)\;.
\end{equation}


Taking the ensemble averages in the localized enstrophy equation
(\ref{loc_enst_eq}) again, this time looking for an upper bound, yields
\[
\tilde{\Psi}_{2R,R}\le \nu\tilde{P}_{2R,R}+\nu\frac{C_0}{R^2}\tilde{E}_{2R,R}\;.
\]
If the covering $\{A(\bfx_i,2R,R)\}_{i=1,n}$ of $B(\bfo,R_0)$ is
optimal, then, in addition to (\ref{e_R12_e_ineq}),
\begin{equation}\label{e_R12_ineq}
\tilde{P}_{2R,R}\le \frac{K_2}{n}\tilde{P}_0\le
K_2\left(\frac{R}{R_0}\right)^2\tilde{P}_0\;;
\end{equation}
hence,
\begin{equation}
\begin{aligned}
\tilde{\Psi}_{2R,R}&\le \nu K_2\left(\frac{R}{R_0}\right)^2\tilde{P}_0
+\nu K_2\frac{C_0}{R^2}\left(\frac{R}{R_0}\right)^2\tilde{E}_0\\
&=K_2\left(\frac{R}{R_0}\right)^2\,\nu\tilde{P}_0
\left(1+C_0\frac{\sigma_0^2}{R^2}\right)\;.
\end{aligned}
\end{equation}

Collecting all the bounds on $\tilde{\Psi}_{2R,R}$ we obtain
\begin{equation}
\frac{1}{K_1}\left(\frac{R}{R_0}\right)^2\,\nu\tilde{P}_0
\left(1-C_0K_1K_2\frac{\sigma_0^2}{R^2}\right)
\le\tilde{\Psi}_{2R,R} \le
K_2\left(\frac{R}{R_0}\right)^2\,\nu\tilde{P}_0
\left(1+C_0\frac{\sigma_0^2}{R^2}\right)\;
\end{equation}
which readily implies the following theorem.

\begin{thm}\label{shells_thm}
Assume that the condition (\ref{scales_con_fin}) holds for some
$0<\gamma<1$. Then, for any $R$ satisfying (\ref{inert_range}), the
ensemble average of the time-averaged enstrophy flux into
the shells of thickness $R$, $\tilde{\Psi}_{2R,R}$,  satisfies
\begin{equation}\label{ener_casc2}
c_{0,\gamma}\left(\frac{R}{R_0}\right)^2\,\nu\tilde{P}_0\le\tilde{\Psi}_{2R,R}\le
c_{1,\gamma} \left(\frac{R}{R_0}\right)^2\,\nu\tilde{P}_0\;
\end{equation}
where $c$, $c_{0,\gamma}$, and $c_{1,\gamma}$ are defined in (\ref{c_con1})
and (\ref{c_con2})
and the average is computed over a time interval $[0,T]$ with $T\ge R_0^2/\nu$
and determined by an optimal covering $\{A(\bfx_i,2R,R)\}_{i=1,n}$ of $B(\bfo,R_0)$
(i.e. satisfying (\ref{x_i_shells_con}), (\ref{shells_n_con1}), and (\ref{Shells_n_con2})).
\end{thm}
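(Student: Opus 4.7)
The plan is to mirror the proof of Theorem \ref{balls_thm}, transcribing every step from balls $B(\bfx_i,R)$ to the shells $A(\bfx_i,2R,R)$ of thickness $\tilde R = R$. First I would test the local enstrophy equation (\ref{loc_enst_eq}) against each shell cut-off $\phi_i = \eta \psi_i$ built from (\ref{eta_def}) and (\ref{psi_shells_def})--(\ref{psi_shells_def_add2}), and sum over an optimal covering $\{A(\bfx_i,2R,R)\}_{i=1,n}$ satisfying (\ref{x_i_shells_con}), (\ref{shells_n_con1}) and (\ref{Shells_n_con2}). This produces an ensemble-averaged identity relating $\tilde\Psi_{2R,R}$, the dissipation term $\nu \tilde P_{2R,R}$, and a remainder involving $(\partial_t + \nu \Delta)\phi_i$ paired with $\tfrac{1}{2}|\omega|^2$.

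Next I would control that remainder using the refined cut-off bounds, which are the shell analogue of (\ref{phi_bd}) with $R$ in place of $R_0$: under the standing assumption $T \ge R_0^2/\nu$ the remainder is bounded in absolute value by $\nu (C_0/R^2) \tilde E_{2R,R}$, producing the two-sided estimate
\[
\nu \tilde P_{2R,R} - \nu \frac{C_0}{R^2}\,\tilde E_{2R,R}
\le \tilde\Psi_{2R,R} \le
\nu \tilde P_{2R,R} + \nu \frac{C_0}{R^2}\,\tilde E_{2R,R}.
\]
The third step is to convert the shell-level averages $\tilde P_{2R,R}$ and $\tilde E_{2R,R}$ to the global quantities $\tilde P_0$ and $\tilde E_0$. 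The lower bound on $n$ in (\ref{shells_n_con1}) combined with non-negativity of $|\nabla\otimes\omega|^2$ gives $\tilde P_{2R,R} \ge (1/K_1)(R/R_0)^2 \tilde P_0$, while the finite-overlap condition (\ref{Shells_n_con2}) together with the upper bound on $n$ gives $\tilde E_{2R,R} \le K_2(R/R_0)^2 \tilde E_0$ (and the reverse bounds obtained by swapping the roles of the two inequalities). Substituting these and recalling (\ref{tau_obs}) factors out $(R/R_0)^2 \nu \tilde P_0$ and leaves a deviation from $1$ controlled by $\sigma_0^2/R^2$.

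The final step is to invoke the hypothesis (\ref{scales_con_fin}) with $c = 1/\sqrt{C_0 K_1 K_2}$ and the range restriction (\ref{inert_range}), giving $C_0 K_1 K_2\, \sigma_0^2/R^2 < \gamma^2$. This collapses the lower factor to $(1-\gamma^2)/K_1 = c_{0,\gamma}$ and the upper factor to $K_2\bigl(1+\gamma^2/(K_1 K_2)\bigr) = c_{1,\gamma}$, exactly the constants defined in (\ref{c_con2}). The main obstacle I anticipate is bookkeeping rather than analytic: one must verify that the cone-type additions (\ref{psi_shells_def_add1})--(\ref{psi_shells_def_add2}) on shells touching $\partial B(\bfo,R_0)$ do not spoil the derivative estimates in (\ref{psi_shells_def}) and that the resulting $C_0, K_1, K_2$ are precisely those used in Theorem \ref{balls_thm}, so that the identification with $c_{0,\gamma}, c_{1,\gamma}$ is legitimate. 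Once this is confirmed, the conclusion follows by direct substitution with no additional estimates required.
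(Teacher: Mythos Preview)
Your proposal is correct and follows essentially the same route as the paper: test (\ref{loc_enst_eq}) against the shell cut-offs, use the refined derivative bounds with $\tilde R=R$ to get the two-sided estimate $\nu\tilde P_{2R,R}\pm \nu(C_0/R^2)\tilde E_{2R,R}$, convert to $\tilde P_0,\tilde E_0$ via the optimal-covering inequalities, and then invoke (\ref{scales_con_fin}) on the range (\ref{inert_range}). One small bookkeeping slip: in your third step you have the roles of the bounds on $n$ reversed --- the lower bound $\tilde P_{2R,R}\ge (1/K_1)(R/R_0)^2\tilde P_0$ uses the \emph{upper} bound $n\le K_1(R_0/R)^2$, while $\tilde E_{2R,R}\le K_2(R/R_0)^2\tilde E_0$ uses the \emph{lower} bound $n\ge (R_0/R)^2$; the inequalities you wrote are nonetheless the right ones.
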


Note that if
\[
\Psi_{2R,R}=\frac{1}{R^2}\tilde{\Psi}_{2R,R}
\]
denotes the ensemble average of the time-\emph{space} averaged
modified energy flux through the shells of thickness $R$ then,
dividing (\ref{ener_casc2}) by $R^{2}$, we obtain the following.

\begin{cor}
Under the conditions of the previous theorem,
\begin{equation}\label{ener_casc3}
c_{0,\gamma}\nu P_0\le\Psi_{2R,R}\le c_{1,\gamma} \nu P_0\;.
\end{equation}
\end{cor}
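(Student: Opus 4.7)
The plan is to derive the corollary as an immediate rescaling of Theorem \ref{shells_thm}. The hypotheses of the corollary are precisely those of the theorem (condition (\ref{scales_con_fin}) with $0<\gamma<1$, an optimal shell covering in the sense of (\ref{x_i_shells_con}), (\ref{shells_n_con1}), (\ref{Shells_n_con2}), and a time interval $[0,T]$ with $T\ge R_0^2/\nu$), so the two-sided estimate (\ref{ener_casc2}) is directly available for every $R$ in the inertial range (\ref{inert_range}).

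Starting from (\ref{ener_casc2}), the plan is to divide all three members by $R^2$. By the definition $\Psi_{2R,R}=\tilde{\Psi}_{2R,R}/R^2$ given just before the corollary, the middle term becomes exactly $\Psi_{2R,R}$, while on the two outer terms the factor $(R/R_0)^2/R^2$ collapses to $1/R_0^2$. Invoking the identity $\tilde{P}_0=R_0^2 P_0$ (established in the derivation leading to (\ref{tau_obs})) then replaces $\tilde{P}_0/R_0^2$ by $P_0$, leaving the constants $c_{0,\gamma}$ and $c_{1,\gamma}$ from (\ref{c_con2}) unchanged on the two sides. This produces the claimed bound $c_{0,\gamma}\nu P_0\le \Psi_{2R,R}\le c_{1,\gamma}\nu P_0$.

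There is no substantive obstacle: the estimate is purely an arithmetic restatement of Theorem \ref{shells_thm} in the natural (space-normalized) averaging convention, and the work of controlling the enstrophy flux through shells in terms of the localized palinstrophy and the Kraichnan scale has already been carried out in the proof of that theorem. The only point to verify carefully is the bookkeeping of the $R_0^{-2}$ factor between the ensemble average $\tilde{P}_0$ and the space-time average $P_0$ (and analogously, that the definition $\Psi_{2R,R}=\tilde{\Psi}_{2R,R}/R^2$ matches the "time-space averaged" language of the corollary's statement), both of which are immediate from the definitions in Section 3 and in (\ref{tau_obs}).
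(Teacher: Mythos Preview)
Your proposal is correct and matches the paper's own derivation essentially verbatim: the paper also obtains the corollary by dividing (\ref{ener_casc2}) through by $R^2$, using the definition $\Psi_{2R,R}=\tilde{\Psi}_{2R,R}/R^2$ together with $\tilde{P}_0=R_0^2 P_0$. There is nothing further to add.
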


Theorem \ref{shells_thm} allows us to show locality of the time-averaged modified
enstrophy flux under the assumption (\ref{scales_con_fin}).  Indeed, the ensemble
average of the  time-averaged flux through the spheres of radius
$R$ satisfying (\ref{inert_range}) is
\[
\tilde{\Psi}_R=R^2\Psi_R\;.
\]
According to Theorem \ref{balls_thm},
\[
c_{0,\gamma}\left(\frac{R}{R_0}\right)^2\,\nu\tilde{P}_0\le\tilde{\Psi}_R\le
c_{1,\gamma} \left(\frac{R}{R_0}\right)^2\,\nu\tilde{P}_0\;.
\]
On the other hand, the ensemble average of the flux through the shells between spheres
of radii $R_2$ and $2R_2$, according to Theorem \ref{shells_thm} is
\[
c_{0,\gamma}\left(\frac{R_2}{R_0}\right)^2\,\nu\tilde{P}_0\le\tilde{\Psi}_{2R_2,R_2}\le
c_{1,\gamma} \left(\frac{R_2}{R_0}\right)^2\,\nu\tilde{P}_0\;.
\]
Consequently,
\begin{equation}\label{time_locality}
\frac{c_{0,\gamma}}{c_{1,\gamma}}\left(\frac{R_2}{R}\right)^2\le
\frac{\tilde{\Psi}_{2R_2,R_2}}{\tilde{\Psi}_R}\le\frac{c_{1,\gamma}}{c_{0,\gamma}}
\left(\frac{R_2}{R}\right)^2\;.
\end{equation}

Thus, under the assumption (\ref{scales_con_fin}), throughout the
inertial range given by (\ref{inert_range}), the contribution of the
shells at scales comparable to $R$ is  comparable to the total flux
at scales $R$, the contribution of the the shells at scales $R_2$
much smaller than $R$ becomes negligible (ultraviolet locality) and
the flux through the shells at scales $R_2$ much bigger than $R$
becomes substantially bigger and thus essentially uncorrelated to
the flux at scales $R$ (infrared locality).

Moreover, if we choose $R_2=2^kR$ with $k$ an integer, the relation (\ref{time_locality})
becomes
\begin{equation}\label{exp_time_locality}
\frac{c_{0,\gamma}}{c_{1,\gamma}}2^{2k}\le
\frac{\tilde{\Psi}_{2^{k+1}R,2^k{R}}}{\tilde{\Psi}_R}\le\frac{c_{1,\gamma}}{c_{0,\gamma}}
2^{2k}\;,
\end{equation}
which implies that the aforementioned manifestations of locality propagate
\emph{exponentially} in the shell number $k$.

In contrast to (\ref{time_locality}), since $\tilde{E}_0=R_0^2E_0$, $\tilde{P}_0=R_0^2P_0$,
$\tilde{\Psi}_{2R_2,R_2}=R_2^2{\Psi}_{2R_2,R_2}$ and $\tilde{\Psi}_R=R^2{\Psi}_R$,
\begin{equation}\label{space_time_locality}
\frac{c_{0,\gamma}}{c_{1,\gamma}}\le
\frac{{\Psi}_{2R_2,R_2}}{{\Psi}_R}\le\frac{c_{1,\gamma}}{c_{0,\gamma}}\;,
\end{equation}
i.e., the ensemble averages of the time-\emph{space} averaged
modified fluxes of the flows satisfying (\ref{scales_con_fin}) are
comparable throughout the scales involved in the inertial range
(\ref{inert_range}) which is consistent with the existence of the
enstrophy cascade.

We conclude this section by noticing that the remarks similar to those at the
end of section \ref{balls}
can be applied here. Namely we have the following.

\begin{obs}\label{E'_rem3}{\em
If the additional assumptions (\ref{psi_shells_def_add1}) and (\ref{psi_shells_def_add2})
for the test functions on the shells $A(\bfx_i,2R,R)$ which are not contained
entirely in $B(\bfo,R_0)$ are not imposed, then the lower bounds in
(\ref{Psi_R12_low}) and (\ref{ener_casc2}) hold with $\tilde{P}_0$ replaced
by the time average of  the {\em{non-localized}} in space enstrophy on $B(\bfo, R_0)$,
\[
\tilde{P}'_0=\frac{1}{T}\int\limits_0^{2T}
\int\limits_{B(\bfx_\bfo,R_0)}|\nabla\otimes\omega|^2\eta\,d\bfx\,dt=R_0^2P'\;.
\]
This is the case because the estimate (\ref{E_R12_E_ineq}) gets replaced with
\[\tilde{P}_{2R,R}\ge\frac{1}{K_1}\left(\frac{R}{R_0}\right)^2\tilde{P}'_0\;.\]

Also, the estimates (\ref{time_locality}) and
(\ref{space_time_locality}) will contain the terms
$P'_0/P_0(=\tilde{P}'_0/\tilde{P}_0)$ in the lower and $P_0/P'_0$ in the upper
bounds. }\end{obs}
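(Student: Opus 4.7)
The plan is to audit which steps of section~\ref{locality} actually used the cone conditions (\ref{psi_shells_def_add1})--(\ref{psi_shells_def_add2}), supply substitutes using only the basic cutoff data (\ref{psi_shells_def}) and the covering count (\ref{shells_n_con1})--(\ref{Shells_n_con2}), and then propagate the change through the flux bounds and the locality ratios. Inspection shows that the cone extensions enter only once: in the lower bound (\ref{E_R12_E_ineq}) for $\tilde P_{2R,R}$. Every other inequality---the local balance (\ref{loc_enst_eq}), the cutoff derivative estimates (\ref{phi_bd}), and the upper bounds (\ref{e_R12_e_ineq}) and (\ref{e_R12_ineq})---uses only $0\le\psi_i\le\psi_0$ together with the covering count and is therefore immune to the change.

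First I would derive the substitute for (\ref{E_R12_E_ineq}). Since $\{A(\bfx_i,2R,R)\}_{i=1,n}$ covers $B(\bfo,R_0)$ and each $\psi_i$ is identically $1$ on $A(\bfx_i,2R,R)\cap B(\bfo,R_0)$, one has $\sum_{i=1}^{n}\psi_i(\bfx)\ge 1$ for every $\bfx\in B(\bfo,R_0)$---a consequence of the covering property alone, independent of how $\psi_i$ is extended outside $B(\bfo,R_0)$. Multiplying by $\eta(t)|\nabla\otimes\omega|^2$ and integrating on $[0,2T]\times B(\bfo,R_0)$ gives
\[
\sum_{i=1}^{n}\iint|\nabla\otimes\omega|^2\phi_i\,d\bfx\,dt\ \ge\ T\tilde P'_0,
\]
and dividing by $nT$ with $n\le K_1(R_0/R)^2$ from (\ref{shells_n_con1}) yields the claimed $\tilde P_{2R,R}\ge K_1^{-1}(R/R_0)^2\tilde P'_0$.

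Next I would feed this substitute into (\ref{low_R12_bd}) while retaining the unchanged upper bound (\ref{e_R12_e_ineq}) for $\tilde E_{2R,R}$. Factoring out $K_1^{-1}(R/R_0)^2\nu\tilde P'_0$ recovers the structural form of (\ref{Psi_R12_low}) with $\tilde P_0\mapsto\tilde P'_0$; the only subtlety is that the resulting Kraichnan-type bracket naturally contains $\tilde E_0/\tilde P'_0\ge\sigma_0^2$ in place of $\sigma_0^2$, which is absorbed either by reinterpreting the Kraichnan scale as $(\tilde E_0/\tilde P'_0)^{1/2}$ or by strengthening hypothesis (\ref{scales_con_fin}) to that quantity. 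With this in place, the derivation of (\ref{ener_casc2}) is a verbatim rerun. For the locality ratios, the upper bounds of (\ref{time_locality}) and (\ref{space_time_locality}) now pair the old upper bound on $\tilde\Psi$ (still carrying $\tilde P_0$) against the new lower bound (carrying $\tilde P'_0$), so the ratio picks up a factor $P_0/P'_0$; the lower bounds, built the other way around, pick up the reciprocal $P'_0/P_0$.

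The main point requiring care---rather than a genuine obstacle, since the work is bookkeeping---is the asymmetry that results: $\tilde P_0$ survives on the upper side of the flux estimates while $\tilde P'_0$ takes its place on the lower side. Substituting symmetrically would either waste information or assert something false, so the discipline of tracking which side each quantity lives on is what the argument really demands.
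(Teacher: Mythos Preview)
Your proposal is correct and follows essentially the same approach as the paper, which in this remark only sketches the key point (namely, that without the cone extensions the covering guarantees $\sum_i\psi_i\ge 1$ only on $B(\bfo,R_0)$, so (\ref{E_R12_E_ineq}) is replaced by the bound against the non-localized $\tilde P'_0$). You are in fact more careful than the paper: you correctly flag that the bracket in the analogue of (\ref{Psi_R12_low}) carries $\tilde E_0/\tilde P'_0\ge\sigma_0^2$ rather than $\sigma_0^2$ itself, a detail the remark suppresses.
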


\begin{obs}\label{unif_ave-obs2} {\em
If we integrate the relation (\ref{loc_enst_eq}) over $B({\bf 0}, R_0)$ (instead of
summing over the optimal covering) and use Lemma \ref{U_ave_lem}, the $\tilde{\Psi}_{2R,R}$ in
Theorem \ref{shells_thm}
can be replaced with the uniform averaged enstrophy flux into shells of thickness $R$,
\[\tilde{\Psi}^{u}_{2R,R}=\frac{1}{R_0^2}\int\limits_{B({\bf 0}, R_0)} \Psi_{\bfx,2R,R}\; d\bfx\;,\]
with $K_1=2^2$ and $K_2=4^2$.}
\end{obs}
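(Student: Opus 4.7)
The plan is to recapitulate the argument leading to Theorem \ref{shells_thm}, replacing every ensemble average $\frac{1}{n}\sum_{i=1}^n\cdot$ with the uniform spatial average $\frac{1}{R_0^2}\int_{B(\bfo,R_0)}\cdot\,d\bfx$ over the shell center. Concretely, I would first apply the localized enstrophy equation (\ref{loc_enst_eq}) with test function $\phi=\eta(t)\psi_{\bfx,2R,R}$ for every $\bfx\in B(\bfo,R_0)$, then integrate in $\bfx$ and divide by $R_0^2$. By linearity this produces the identity
\[
\tilde{\Psi}^u_{2R,R}=\nu\,\tilde{P}^u_{2R,R}-\frac{1}{R_0^2}\int_{B(\bfo,R_0)}\frac{1}{T}\iint\frac{1}{2}|\omega|^2\bigl(\partial_t\phi+\nu\Delta\phi\bigr)\,d\bfy\,dt\,d\bfx,
\]
where $\tilde{P}^u_{2R,R}$ and $\tilde{E}^u_{2R,R}$ are the obvious uniform averages of $P_{\bfx,2R,R}$ and $E_{\bfx,2R,R}$. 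The pointwise estimates in (\ref{phi_bd}), applied with $\tilde{R}=R$ and under the standing assumption $T\ge R_0^2/\nu$, bound the remainder in absolute value by $\nu C_0 R^{-2}\tilde{E}^u_{2R,R}$.

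Second, I would establish a shell analogue of Lemma \ref{U_ave_lem}: for the non-negative quantities $P_{\bfx,2R,R}$ and $E_{\bfx,2R,R}$, the cell-decomposition argument of the proof of that lemma applies almost verbatim. Cover $B(\bfo,R_0)$ by cubic cells $\{C_i\}$ of side $R/2$ (so $4\le n\le 8$), sample sup/inf points $\ubx_i,\lbx_i$ in each cell, and use continuity of $\bfx\mapsto P_{\bfx,2R,R}$ together with the non-negativity of the integrands. The sampled shells form an optimal covering of $B(\bfo,R_0)$ in the sense of Definition \ref{opt_cover_def}, and the boundary cone conditions (\ref{psi_shells_def_add1})-(\ref{psi_shells_def_add2}) guarantee that shells whose centers fall near or just outside $\partial B(\bfo,R_0)$ still cover $B(\bfo,R_0)$ through their conically extended support. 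The outcome is
\[
\tfrac{1}{4}\bigl(R/R_0\bigr)^2\tilde{P}_0\le\tilde{P}^u_{2R,R}\le 16\bigl(R/R_0\bigr)^2\tilde{P}_0,
\]
with the analogous estimate for $\tilde{E}^u_{2R,R}$; these are precisely the bounds (\ref{E_R12_E_ineq})-(\ref{e_R12_e_ineq}) with $1/K_1=1/4$ and $K_2=16$.

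Finally, substituting these inequalities into the identity of the first step and repeating (\ref{low_R12_bd})-(\ref{Psi_R12_low}) along with their upper-bound counterparts — now with the constants $K_1=4$ and $K_2=16$ replacing those coming from the optimal ball covering — yields, under the hypothesis (\ref{scales_con_fin}), the same two-sided bound as (\ref{ener_casc2}) but with $\tilde{\Psi}_{2R,R}$ replaced by $\tilde{\Psi}^u_{2R,R}$ and with $c_{0,\gamma},c_{1,\gamma}$ evaluated at $K_1=4,K_2=16$.

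The main obstacle is the shell version of Lemma \ref{U_ave_lem} used in the second step. The cubic cell decomposition goes through as in the proof of Lemma \ref{U_ave_lem}, but one must verify, with some care, that shells whose outer ball $B(\bfx,2R)$ exits $B(\bfo,R_0)$ are handled consistently as $\bfx$ slides within a cell — specifically, that the test functions satisfying (\ref{psi_shells_def_add1})-(\ref{psi_shells_def_add2}) can be chosen so that the sampled shells form a genuine optimal covering with the absolute constants $K_1=4, K_2=16$. Once this bookkeeping is settled, the remainder of the argument is a direct transcription of the proofs of Theorem \ref{shells_thm} and Lemma \ref{U_ave_lem}.
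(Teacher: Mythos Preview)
Your proposal is correct and follows exactly the approach the paper sketches: integrate (\ref{loc_enst_eq}) in the center variable over $B(\bfo,R_0)$, bound the remainder via (\ref{phi_bd}), and replace the covering estimates (\ref{E_R12_E_ineq})--(\ref{e_R12_e_ineq}) by the uniform-average analogues coming from the cell-decomposition argument of Lemma~\ref{U_ave_lem}. The paper does not spell out the shell version of Lemma~\ref{U_ave_lem} separately---it simply invokes Lemma~\ref{U_ave_lem}, tacitly relying on the fact that the same proof works for the shell cutoffs $\psi_{\bfx,2R,R}$---so your explicit identification of this as the one point requiring verification is appropriate, and your handling of the boundary cone conditions (\ref{psi_shells_def_add1})--(\ref{psi_shells_def_add2}) matches what the paper has in mind.
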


\begin{obs}
{\em Working with (\ref{loc_ene_ineq}) yields similar results for the locality of the energy
fluxes $\Phi_{\bfx_0,R_1,R_2}$ and $\tilde{\Phi}_{2R,R}$. Namely, Theorems
\ref{shells_thm1} and \ref{shells_thm} hold with $\Psi$ replaced with $\Phi$, $P$
replaced with $E'$ and length scales $\sigma$ in the sufficient conditions
(\ref{scales_con2_fin})
and (\ref{scales_con_fin}) replaced with $\tau$.
}
\end{obs}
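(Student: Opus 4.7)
The plan is to mirror the derivations of Theorems \ref{shells_thm1} and \ref{shells_thm} verbatim, but starting from the local energy equation (\ref{loc_ene_ineq}) rather than the local enstrophy equation (\ref{loc_enst_eq}). Using the same shell-localized cut-off $\phi=\eta\psi$ from (\ref{psi_shells_def}), I would first rearrange (\ref{loc_ene_ineq}) to
\[
\Phi_{\bfx_0,R_1,R_2} = \nu\cdot\frac{1}{T}\iint|\nabla\otimes\bfu|^2\phi\,d\bfx\,dt - \frac{1}{2T}\iint|\bfu|^2(\partial_t\phi+\nu\Delta\phi)\,d\bfx\,dt,
\]
the energy-flux analog of the rearrangement of (\ref{loc_enst_eq}) that led to (\ref{Psi_loc_bd}). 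In 2D with $\nabla\cdot\bfu=0$, the identity $|\nabla\otimes\bfu|^2-|\omega|^2=-2\det(\nabla\otimes\bfu)$ together with the fact that $\det(\nabla\otimes\bfu)$ is itself a divergence allows one, via two integrations by parts, to express $\int\phi\det(\nabla\otimes\bfu)\,d\bfx$ as a quadratic form in $\bfu$ paired with $D^2\phi$. This contributes only an $O((1/\tilde{R}^2)\,e_{\bfx_0,R_1,R_2})$ correction to $(1/T)\iint|\omega|^2\phi\,d\bfx\,dt = 2E'_{\bfx_0,R_1,R_2}$, i.e., of exactly the same order as the $\partial_t\phi+\nu\Delta\phi$ error already present. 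Thus the role played by the palinstrophy $P$ in the enstrophy argument is taken over by the enstrophy $E'$, and the role of $E$ in the error term is taken over by $e$.

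Next, controlling $|\partial_t\phi+\nu\Delta\phi|$ via the analog of (\ref{phi_bd}) with $R$ replaced by $\tilde{R}$ (valid under $T\ge R_0^2/\nu$) yields
\[
\nu E'_{\bfx_0,R_1,R_2}\Bigl(1-C\frac{\tau_{\bfx_0,R_1,R_2}^2}{\tilde{R}^2}\Bigr) \le \Phi_{\bfx_0,R_1,R_2} \le \nu E'_{\bfx_0,R_1,R_2}\Bigl(1+C\frac{\tau_{\bfx_0,R_1,R_2}^2}{\tilde{R}^2}\Bigr),
\]
with $\tau_{\bfx_0,R_1,R_2}=(e_{\bfx_0,R_1,R_2}/E'_{\bfx_0,R_1,R_2})^{1/2}$ and $C$ a fixed multiple of $C_0$ — the direct analog of (\ref{Psi_loc_bd}) with $P\to E'$ and $\sigma\to\tau$. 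Imposing $\tau_{\bfx_0,R_1,R_2}<(\gamma/C^{1/2})\tilde{R}$ then delivers the energy-flux version of Theorem \ref{shells_thm1}. For the analog of Theorem \ref{shells_thm}, I would take ensemble averages over the optimal shell covering $\{A(\bfx_i,2R,R)\}_{i=1,n}$ and rerun (\ref{E_R12_E_ineq})--(\ref{Psi_R12_low}) with $\tilde{P}_{2R,R}\to\tilde{E}'_{2R,R}$ and $\tilde{E}_{2R,R}\to\tilde{e}_{2R,R}$; the same covering-counting produces the same combinatorial factors, so the error becomes a constant multiple of $\tau_0^2/R^2$ with $\tau_0=(e_0/E'_0)^{1/2}$, and the condition (\ref{scales_con_fin}) written with $\sigma_0\to\tau_0$ keeps this factor below $\gamma^2$ throughout the range (\ref{inert_range}) (also transcribed with $\sigma_0\to\tau_0$).

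The main — and essentially only — non-mechanical step is the 2D null-Lagrangian identity used to replace $\int|\nabla\otimes\bfu|^2\phi$ by $\int|\omega|^2\phi$ modulo a $D^2\phi$-type remainder; once one verifies that this remainder has size $(1/\tilde{R}^2)\,e$ and thus absorbs harmlessly into the preexisting error, everything else is a line-by-line transcription of the enstrophy-flux arguments of Section \ref{locality}.
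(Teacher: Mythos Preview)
Your proposal is correct and carries out exactly what the paper asserts without proof: the remark simply states that the arguments of Section~\ref{locality} go through under the indicated substitutions, and you have transcribed them faithfully. The one point you supply that the paper glosses over---the 2D null-Lagrangian identity converting $\iint|\nabla\otimes\bfu|^2\phi$ to $\iint|\omega|^2\phi$ modulo a $D^2\phi$-remainder of size $O(\nu\, e_{\bfx_0,R_1,R_2}/\tilde R^2)$---is indeed required to match the paper's definition of $E'$ (which uses $|\omega|^2$, whereas (\ref{loc_ene_ineq}) produces $|\nabla\otimes\bfu|^2$); your handling of it is correct, the remainder merges with the existing $(\partial_t\phi+\nu\Delta\phi)$ error, and the resulting constants differ from those in Theorems~\ref{shells_thm1} and \ref{shells_thm} only by an absolute factor.
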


The locality of energy flux into shells related to the inverse
energy cascades is established in similar way (except, because of
the no-slip boundary condition (\ref{no_slip}) we can set
$\psi_0\equiv 1$). Note that the flux on a shell is defined exactly
in the same way as in (\ref{shell_ene_flux_def}), and we obtain the
exact equivalent of Theorem \ref{shells_thm1} in this setting. If in
addition the sufficient condition for inverse energy cascade
(\ref{back_ene_casc_con}) holds, then for $R_0<R_2<R_1<D/2$ we can
prove the following equivalent of Theorem \ref{shells_thm}.

\begin{thm}\label{back_shells_thm}
Assume that the condition (\ref{back_ene_casc_con}) holds for some
$0<\gamma<1/\sqrt{2C_0}$. Then, for any $R$ satisfying
(\ref{back_iner_range}), the ensemble average of the time-averaged
total energy flux out of the shells of thickness $R$,
$\tilde{\bar{\Phi}}_{2R,R}$,  satisfies
\begin{equation}\label{back_ener_casc2}
\bar{c}_{0,\gamma}\left(\frac{R}{R_0}\right)^2\,\nu{E}\le\tilde{\bar{\Phi}}_{2R,R}\le
\bar{c}_{1,\gamma} \left(\frac{R}{R_0}\right)^2\,\nu{E}\;,
\end{equation}
where $E$ is as in (\ref{back_E_def}), $\bar{c}_{0,\gamma}$ and
$\bar{c}_{1,\gamma}$ are defined in (\ref{back_casc_constants}), and
the average is computed over a time interval $[0,T]$ with $T\ge
R_0^2/\nu$ and determined by an optimal covering
$\{A(\bfx_i,2R,R)\}_{i=1,n}$ of $B(\bfo,R_0)$ (i.e. satisfying
(\ref{x_i_shells_con}), (\ref{shells_n_con1}), and
(\ref{Shells_n_con2})).
\end{thm}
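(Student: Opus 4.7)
The plan is to mirror the proof of Theorem \ref{shells_thm}, substituting throughout: the local enstrophy equation by the local energy equation (\ref{loc_ene_ineq}); the inner-shell cut-offs of section \ref{locality} by outer-region-style cut-offs $\bphi_i = \eta \bpsi_i$ built from (\ref{tpsi_def}), with transitions of width $R_0$ rather than of the shell thickness; and the Kraichnan-scale condition by the Taylor-scale condition (\ref{back_ene_casc_con}). The no-slip boundary condition (\ref{no_slip}) allows us to take $\psi_0\equiv 1$, eliminating the cone modifications (\ref{psi_def_add1})--(\ref{psi_def_add2}).

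Concretely, for each $\bfx_i$ in the optimal shell covering I would choose $\bpsi_i$ equal to $1$ on $A(\bfx_i,2R,R)$, vanishing on $B(\bfx_i,R-R_0)\cup(\Omega\setminus B(\bfx_i,2R+R_0))$, and satisfying the analogues of (\ref{tpsi_def}) with both derivative bounds scaled by $R_0$. Plugging $\phi=\bphi_i$ into (\ref{loc_ene_ineq}) gives
\[
\bPhi_{\bfx_i,2R,R} \;=\; \nu\,\bE_{\bfx_i,2R,R} \;-\; \frac{1}{T}\iint \frac{|\bfu|^2}{2}\bigl(\partial_t\bphi_i+\nu\Delta\bphi_i\bigr)\,d\bfx\,dt,
\]
and the derivative bounds together with $T\ge R_0^2/\nu$, exactly as in (\ref{phi_bd}) and (\ref{out_ene_est}), control the last term in modulus by $(C_0/R_0^2)\,\be_{\bfx_i,2R,R}$.

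Averaging over the optimal covering and using the analogues of (\ref{E_R12_E_ineq})--(\ref{e_R12_e_ineq}) to match $\tilde{\bE}_{2R,R}$ and $\tilde{\be}_{2R,R}$ to the global $E$ and $e$ from (\ref{back_E_def})--(\ref{back_e_def}), one obtains
\[
\frac{1}{K_1}\!\left(\frac{R}{R_0}\right)^{\!2}\!\nu E\!\left(1-C_0 K_1 K_2\,\tfrac{\tau^2}{R_0^2}\right) \;\le\; \tilde{\bPhi}_{2R,R} \;\le\; K_2\!\left(\frac{R}{R_0}\right)^{\!2}\!\nu E\!\left(1+C_0\,\tfrac{\tau^2}{R_0^2}\right).
\]
The hypothesis (\ref{back_ene_casc_con}) with $\gamma<1/\sqrt{2C_0}$ then controls the parentheses uniformly in $R$, delivering (\ref{back_ener_casc2}) with constants of the form (\ref{back_casc_constants}).

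The main obstacle is geometric rather than analytic: in the regime $R_0<R<D/2$, the centers $\bfx_i$ of the shells $A(\bfx_i,2R,R)$ need not lie inside $B(\bfo,R_0)$, and one has to verify that an optimal covering, with dimension-only constants in the spirit of Definition \ref{opt_cover_def} and (\ref{shells_n_con1})--(\ref{Shells_n_con2}), exists and that the transition-layer rings of thickness $R_0$ fit inside $\Omega$ without distorting the matching between the localized shell quantities and the global $E$, $e$. Once this geometric scaffolding is in place, the analytic content transcribes verbatim from the enstrophy-cascade locality argument.
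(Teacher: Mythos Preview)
Your proposal is correct and follows essentially the same route the paper indicates: transplant the argument of Theorem \ref{shells_thm} to the local energy equation, use transition layers of width $R_0$ in the style of (\ref{tpsi_def}), and exploit the no-slip condition to set $\psi_0\equiv 1$ (which the paper singles out as the only real modification). Your identification of the geometric covering issue in the regime $R_0<R<D/2$ as the sole point requiring care is also on target.
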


\bibliographystyle{plain}
\bibliography{DG2}

\end{document}